\newtheorem{theorem}{Theorem}[section]
\newtheorem{lemma}[theorem]{Lemma}
\newtheorem{corollary}[theorem]{Corollary}
\newtheorem{remark}[theorem]{Remark}
\newtheorem{problem}[theorem]{Problem}
\begin{document}
\def\ss{{\mathbb S}^2}
\def\rr{{\mathbb R}^2}
\def\css{{\rm cl}_{{\mathbb S}^2}}
\def\crr{{\rm cl}_{{\mathbb R}^2}}

\title{On $C$-embedded subspaces of the Sorgenfrey plane}{On $C$-embedded subspaces of the Sorgenfrey plane}

\author{Olena Karlova}{O. Karlova}

\begin{abstract}
 We prove that every $C^*$-embedded subset of $\ss$ is a hereditarily Baire subspace of $\mathbb R^2$.
 We also show that for a subspace $E\subseteq\{(x,-x):x\in\mathbb R\}$ of the Sorgenfrey plane $\mathbb S^2$ the following conditions are equivalent: (i) $E$ is $C$-embedded in $\mathbb S^2$;  (ii) $E$ is $C^*$-embedded in $\mathbb S^2$; (iii) $E$ is a countable $G_\delta$-subspace of $\rr$ and (iv) $E$ is a countable functionally closed subspace of $\ss$.
\end{abstract}

\section{Introduction} Recall that a subset $A$ of a topological space $X$ is called {\it functionally open} ({\it functionally closed}) {\it in $X$} if there exists a continuous function $f:X\to [0,1]$ such that $A=f^{-1}((0,1])$ ($A=f^{-1}(0)$).  Sets $A$ and $B$ are {\it completely separated in $X$} if there exists a continuous function $f:X\to [0,1]$ such that $A\subseteq f^{-1}(0)$ and $B\subseteq f^{-1}(1)$.

A subspace $E$ of a topological space $X$ is
\begin{itemize}
  \item {\it $C$-embedded ($C^*$-embedded) in $X$} if every (bounded) continuous function $f:E\to \mathbb R$ can be continuously extended on $X$;

  \item  {\it $z$-embedded in $X$} if every functionally closed set in $E$ is the restriction of a functionally closed set in $X$ to $E$;

  \item {\it well-embedded in $X$}~\cite{HoshinaYamazaki} if $E$ is completely separated from any functionally closed set of $X$ disjoint from $E$.
\end{itemize}

 Clearly, every $C$-embedded subspace of $X$ is $C^*$-embedded in $X$. The converse in not true. Indeed, if  $E=\mathbb N$ and $X=\beta\mathbb N$, then $E$ is $C^*$-embedded in $X$ (see \cite[3.6.3]{Eng-eng}), but the function $f:E\to \mathbb R$, $f(x)=x$  for every $x\in E$, does not extend to a continuous function $f:X\to\mathbb R$.

 A space $X$ has {\it the property $(C^*=C)$} \cite{Ohta} if every closed $C^*$-embedded subset of $X$ is $C$-embedded in $X$.
 The classical Tietze-Urysohn Extension Theorem says that if $X$ is a normal space, then every closed subset of $X$ is $C^*$-embedded and $X$ has the property  $(C^*=C)$. Moreover, a space  $X$ is normal if and only if every its closed subset is $z$-embedded (see \cite[Proposition 3.7]{KCMUC}).

The following theorem was proved by Blair and Hager in \cite[Corollary 3.6]{BlairHager}.

\begin{theorem}\label{th:Blair-Hager}
A subset $E$ of a topological space $X$ is $C$-embedded in $X$ if and only if $E$ is $z$-embedded and  well-embedded in $X$.
\end{theorem}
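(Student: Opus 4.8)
The plan is to prove the two implications separately and to route everything through complete separation, since both $z$-embeddedness and well-embeddedness are statements about functionally closed sets (zero-sets) and their separation.

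For the forward implication, assume $E$ is $C$-embedded. To see that $E$ is $z$-embedded, take a functionally closed $Z=g^{-1}(0)$ in $E$ with $g\colon E\to[0,1]$; extend $g$ to a continuous $G\colon X\to\mathbb R$ (possible since $E$ is a fortiori $C^*$-embedded), truncate it to $\widehat G=\min(1,\max(0,G))$ so that $\widehat G|_E=g$, and observe that the functionally closed set $\widehat G^{-1}(0)$ meets $E$ exactly in $Z$. For well-embeddedness, let $F=h^{-1}(0)$ be functionally closed in $X$ with $F\cap E=\emptyset$, so $h>0$ on $E$; the function $1/h$ is continuous on $E$ and, crucially, possibly unbounded, so here I would use the full strength of $C$-embeddedness to extend it to some $\Phi\colon X\to\mathbb R$. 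Then $\Phi\cdot h$ is continuous on $X$, equals $1$ on $E$ and $0$ on $F$, and after truncation to $[0,1]$ witnesses the complete separation of $E$ from $F$.

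For the reverse implication, assume $E$ is $z$-embedded and well-embedded. I would first show that $E$ is $C^*$-embedded by verifying the criterion of the classical Urysohn Extension Theorem (Gillman--Jerison): any two completely separated sets $A,B$ in $E$ are completely separated in $X$. Choosing $g\colon E\to[0,1]$ with $A\subseteq g^{-1}(0)$ and $B\subseteq g^{-1}(1)$, set $Z_1=g^{-1}([0,1/3])$ and $Z_2=g^{-1}([2/3,1])$, which are disjoint functionally closed sets in $E$ containing $A$ and $B$. By $z$-embeddedness write $Z_1=F_1\cap E$ and $Z_2=F_2\cap E$ with $F_1,F_2$ functionally closed in $X$. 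The intersection $F_1\cap F_2$ is again functionally closed in $X$ and is disjoint from $E$ (because $Z_1\cap Z_2=\emptyset$), so well-embeddedness produces $u\colon X\to[0,1]$ with $u\equiv 0$ on $E$ and $u\equiv 1$ on $F_1\cap F_2$.

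The heart of the argument, and what I expect to be the main obstacle, is to disjointify $F_1$ and $F_2$, which a priori overlap; the function $u$ is exactly the tool that creates the room to do so. Setting $F_i'=F_i\cap u^{-1}([0,1/2])$ for $i=1,2$, each $F_i'$ is functionally closed in $X$ (a finite intersection of such sets), still contains $Z_i$ since $Z_i\subseteq E\subseteq u^{-1}(0)$, and $F_1'\cap F_2'\subseteq(F_1\cap F_2)\cap u^{-1}([0,1/2])=\emptyset$ because $F_1\cap F_2\subseteq u^{-1}(1)$. Two disjoint functionally closed sets $a^{-1}(0)$, $b^{-1}(0)$ in $X$ are completely separated via $a/(a+b)$, so $A$ and $B$ are completely separated in $X$, and $E$ is $C^*$-embedded. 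Finally I would upgrade $C^*$-embeddedness to $C$-embeddedness using well-embeddedness a second time: given continuous $f\colon E\to\mathbb R$, compose with a homeomorphism $\tau\colon\mathbb R\to(-1,1)$, extend the bounded map $\tau\circ f$ to $g\colon X\to[-1,1]$, note that the zero-set $F=\{x:|g(x)|=1\}$ is disjoint from $E$, and multiply $g$ by a function separating $E$ from $F$ to pull the extension strictly back into $(-1,1)$; applying $\tau^{-1}$ then yields a continuous extension of $f$ to all of $X$.
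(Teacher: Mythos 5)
Your proposal is correct in both directions, but note that the paper itself contains no proof to compare against: Theorem~\ref{th:Blair-Hager} is quoted as a known result of Blair and Hager (Corollary~3.6 of the cited paper), so your write-up supplies an argument where the paper only gives a reference. What you give is essentially the classical Gillman--Jerison-style proof, and every step checks out. In the forward direction, the truncation trick for $z$-embedding and the $1/h$-extension trick for well-embedding (extend $1/h|_E$ to $\Phi$, then $\Phi\cdot h$ is $1$ on $E$ and $0$ on $F$) are exactly the standard devices, and the latter correctly uses the full strength of $C$-embedding since $1/h$ may be unbounded on $E$. In the reverse direction, your disjointification of $F_1$ and $F_2$ is the genuine crux and is handled correctly: $F_1\cap F_2$ is functionally closed and misses $E$, well-embeddedness gives $u$ vanishing on $E$ and equal to $1$ on $F_1\cap F_2$, and cutting down to $F_i'=F_i\cap u^{-1}([0,1/2])$ preserves $Z_i\subseteq F_i'$ while forcing $F_1'\cap F_2'=\emptyset$; the separation $a/(a+b)$ (with $a,b\ge 0$, which one may assume for zero-sets) then verifies the Urysohn Extension Theorem criterion for $C^*$-embedding. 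The final upgrade to $C$-embedding via $\tau\colon\mathbb R\to(-1,1)$, the zero-set $F=\{x:|g(x)|=1\}$ disjoint from $E$, and multiplication by a separating function is again the standard argument and uses well-embeddedness a second time, as it must (only minor implicit points remain, e.g.\ truncating the extension of $\tau\circ f$ into $[-1,1]$, which is harmless). So your proof is a complete and correct substitute for the citation, following the same route as the original Blair--Hager argument.
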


A space $X$ is said to be {\it $\delta$-normally separated}~\cite{Mack} if every closed subset of $X$ is well-embedded in $X$. The class of $\delta$-normally separated spaces includes all normal spaces and all countably compact spaces. Theorem~\ref{th:Blair-Hager} implies the following result.

\begin{corollary}
  Every $\delta$-normally separated space has the property $(C^*=C)$.
\end{corollary}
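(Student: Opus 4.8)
The plan is to combine the Blair--Hager criterion (Theorem~\ref{th:Blair-Hager}) with the defining hypothesis of $\delta$-normal separation, so that the only genuinely new thing to check is that $C^*$-embeddedness already forces $z$-embeddedness. Concretely, let $E$ be a closed $C^*$-embedded subset of a $\delta$-normally separated space $X$; the goal is to show that $E$ is $C$-embedded in $X$. By Theorem~\ref{th:Blair-Hager} it suffices to verify that $E$ is simultaneously $z$-embedded and well-embedded in $X$.

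The well-embeddedness is immediate from the hypotheses: since $X$ is $\delta$-normally separated, by definition every closed subset of $X$ is well-embedded in $X$, and $E$ is closed by assumption.

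For the $z$-embeddedness, take an arbitrary functionally closed set $Z$ in $E$, say $Z=g^{-1}(0)$ for a continuous function $g\colon E\to[0,1]$. Because $g$ is bounded and $E$ is $C^*$-embedded, $g$ extends to a continuous function $\tilde g\colon X\to\mathbb R$. Composing with the retraction $r\colon\mathbb R\to[0,1]$, $r(t)=\min(\max(t,0),1)$, I obtain a continuous function $h=r\circ\tilde g\colon X\to[0,1]$ which still extends $g$, since $g$ is already $[0,1]$-valued. Then $h^{-1}(0)$ is functionally closed in $X$, and as $h$ agrees with $g$ on $E$ one has $h^{-1}(0)\cap E=g^{-1}(0)=Z$. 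This exhibits $Z$ as the restriction to $E$ of a functionally closed set of $X$, so $E$ is $z$-embedded.

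With both conditions verified, Theorem~\ref{th:Blair-Hager} yields that $E$ is $C$-embedded in $X$, which is precisely the property $(C^*=C)$. I do not anticipate any real obstacle: the argument is a direct reduction to the Blair--Hager criterion, and the only point requiring a little care is keeping the extension valued in $[0,1]$ so that its zero set is genuinely functionally closed in $X$ — this is handled by the retraction step above.
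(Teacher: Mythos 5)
Your argument is correct and is exactly the one the paper intends: the corollary is stated as an immediate consequence of Theorem~\ref{th:Blair-Hager}, with well-embeddedness coming from the definition of $\delta$-normal separation and $z$-embeddedness from the standard fact that $C^*$-embedded sets are $z$-embedded, which you verify directly via the extension-plus-retraction step. No gaps; your write-up simply makes explicit what the paper leaves implicit.
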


According to \cite{Tanaka} every $C^*$-embedded subspace of a completely regular first countable space is closed. The following problem is still open:

\begin{problem}\cite{OpenProblems2}
  Does there exist a first countable completely regular space without property  $(C^*=C)$?
\end{problem}

H.~Ohta in \cite{Ohta} proved that the Niemytzki plane has the property $(C^*=C)$ and asked does the Sorgenfrey plane $\mathbb S^2$ (i.e., the square of the Sorgenfrey line $\mathbb S$) have the property $(C^*=C)$?

In the given paper we obtain some necessary conditions on a set $E\subseteq \ss$ to be $C^*$-embedded. We prove that every $C^*$-embedded subset of $\ss$ is a hereditarily Baire subspace of $\mathbb R^2$. We also characterize $C$- and $C^*$-embedded subspaces of the anti-diagonal $\mathbb D=\{(x,-x):x\in\mathbb R\}$ of $\ss$. Namely, we prove that for a subspace $E\subseteq \mathbb D$ of $\mathbb S^2$ the following conditions are equivalent: (i) $E$ is $C$-embedded in $\mathbb S^2$;  (ii) $E$ is $C^*$-embedded in $\mathbb S^2$; (iii) $E$ is a countable $G_\delta$-subspace of $\rr$ and (iv) $E$ is a countable functionally closed subspace of $\ss$.

\section{Every finite power of the Sorgenfrey line is a hereditarily  $\alpha$-favorable space}

Recall the definition of the Choquet game on a topological space $X$ between two players $\alpha$ and $\beta$. Player  $\beta$ goes first and chooses a nonempty open subset $U_0$ of $X$. Player $\alpha$ chooses a nonempty open subset $V_1$ of $X$ such that $V_1\subseteq U_0$. Following this player $\beta$ must select another nonempty open subset $U_1\subseteq V_1$ of $X$ and $\alpha$ must select a nonempty open subset $V_2\subseteq U_1$.  Acting in this way, the players $\alpha$ and $\beta$ obtain sequences of nonempty open sets $(U_n)_{n=0}^\infty$ and $(V_n)_{n=1}^\infty$ such that $U_{n-1}\subseteq V_n\subseteq U_n$ for every $n\in\mathbb N$. The player $\alpha$ wins if $\bigcap\limits_{n=1}^\infty V_n\ne\emptyset$. Otherwise, the player $\beta$ wins.
If there exists a rule (a strategy) such that $\alpha$ wins if he plays according to this rule, then $X$ is called {\it $\alpha$-favorable}.  Respectively, $X$ is called {\it $\beta$-unfavorable} if the player $\beta$ has no winning strategy. Clearly, every $\alpha$-favorable space $X$ is  $\beta$-unfavorable. Moreover, it is known~\cite{S-R} that a topological space $X$ is Baire if and only if it is  $\beta$-unfavorable in the Choquet game.

If $A$ is a subspace of a topological space $X$, then $\overline A$ and ${\rm int} A$ mean the closure and the interior of $A$ in $X$, respectively.

\begin{lemma}\label{union_of_alpha_favorable}
  Let $X=\bigcup\limits_{k=1}^n X_k$, where $X_k$ is an $\alpha$-favorable subspace of $X$ for every $k=1,\dots,n$. Then $X$ is an $\alpha$-favorable space.
\end{lemma}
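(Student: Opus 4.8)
The plan is to reduce to the case $n=2$ and then manufacture a winning strategy for $\alpha$ on $X=X_1\cup X_2$ out of winning strategies on the two pieces. For the reduction I would use that $\alpha$-favorability is an intrinsic property of a space together with the transitivity of the subspace topology: setting $Y=X_1\cup\dots\cup X_{n-1}$, each $X_k$ with $k\le n-1$ is $\alpha$-favorable as a subspace of $Y$, so by the induction hypothesis $Y$ is $\alpha$-favorable, and then $X=Y\cup X_n$ is a union of two $\alpha$-favorable subspaces. Hence it suffices to treat $n=2$.

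Fix winning strategies $\sigma_1,\sigma_2$ for $\alpha$ in the Choquet games on $X_1$ and $X_2$, and suppose $\beta$ opens with a nonempty open $U_0\subseteq X$. I would split according to a dichotomy applied to $U_0$. If $U_0$ contains a nonempty open (in $X$) set $W$ disjoint from $X_1$, then $W\subseteq X_2$; since $W$ is open in $X$ and contained in $X_2$, the subspace topologies of $X$ and $X_2$ agree on every subset of $W$, so the game can be confined to $W$ and played entirely inside $X_2$. Then $\alpha$ treats $W$ as $\beta$'s opening move in the $X_2$-game and follows $\sigma_2$ from there on; each resulting $V_n\subseteq W\subseteq U_0$ is open in $X$, and $\sigma_2$ guarantees $\bigcap_n V_n\ne\emptyset$.

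In the opposite case $U_0\cap X_1$ is dense in $U_0$, i.e. every nonempty open subset of $U_0$ meets $X_1$, and here $\alpha$ simulates a play of the Choquet game on $X_1$ via traces. Along a play $U_0\supseteq V_1\supseteq U_1\supseteq\dots$ in $X$ I maintain the traces $U_n^1=U_n\cap X_1$ and $V_n^1=V_n\cap X_1$. Because every $\beta$-move is a nonempty open subset of $U_0$, density forces $U_n^1\ne\emptyset$, and the inclusions $U_n\subseteq V_n\subseteq U_{n-1}$ pass to the traces, so $U_0^1\supseteq V_1^1\supseteq U_1^1\supseteq\dots$ is a legal play in $X_1$ in which $\beta$ plays the sets $U_n^1$. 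When $\sigma_1$ instructs $\alpha$ to answer with an open set $V_n^1$ of $X_1$, I fix an open set $W_n$ of $X$ with $W_n\cap X_1=V_n^1$ and have $\alpha$ play $V_n=W_n\cap U_{n-1}$ in the ambient game; this set is open in $X$, contained in $U_{n-1}$, nonempty, and satisfies $V_n\cap X_1=V_n^1$ since $V_n^1\subseteq U_{n-1}^1$.

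The main obstacle is precisely this faithful realization of $\sigma_1$ inside the ambient game: one must ensure that $\beta$'s moves never trace to the empty set in $X_1$ (else the simulated play stalls) and that each set prescribed by $\sigma_1$ is the trace of a legal ambient move. The density dichotomy settles the first point and intersecting a fixed representative $W_n$ with the current $\beta$-set settles the second. Granting this, the traces form a play in which $\alpha$ uses the winning strategy $\sigma_1$, so $\bigcap_n V_n^1\ne\emptyset$, whence $\bigcap_n V_n\supseteq\bigcap_n(V_n\cap X_1)=\bigcap_n V_n^1\ne\emptyset$. In either case $\alpha$ wins, so $X=X_1\cup X_2$ is $\alpha$-favorable, and the induction is complete.
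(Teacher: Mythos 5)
Your proof is correct, but it takes a genuinely different route from the paper's. The paper argues ``softly'': it sets $G_i={\rm int}\,\overline{X_i}$, invokes three permanence properties of $\alpha$-favorability (a space with a dense $\alpha$-favorable subspace is $\alpha$-favorable, an open subspace of an $\alpha$-favorable space is $\alpha$-favorable, and a union of open $\alpha$-favorable subspaces is $\alpha$-favorable), and concludes since $G_1\cup G_2$ is dense in $X=\overline{X_1}\cup\overline{X_2}$ --- all three facts being left to the reader. You instead build the winning strategy by hand: a one-time dichotomy on $\beta$'s opening move $U_0$ (either some nonempty open $W\subseteq U_0$ misses $X_1$, hence lies in $X_2$ and the game localizes there, or $U_0\cap X_1$ is dense in $U_0$ and you run $\sigma_1$ on traces). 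Your trace simulation is sound --- density guarantees the traces $U_n\cap X_1$ are nonempty since every $\beta$-move stays inside $U_0$, and the verification $V_n\cap X_1=V_n^1$ via $V_n^1\subseteq U_{n-1}$ is exactly the point that needs checking --- and in effect your Case 2 \emph{is} the proof of the paper's dense-subspace permanence fact, while Case 1 is its open-localization fact; so your argument is self-contained where the paper's is modular but leans on unproved ``easy to see'' steps. Your reduction from $n$ to $2$ via $Y=X_1\cup\dots\cup X_{n-1}$ and transitivity of the subspace topology is the same induction the paper implicitly intends. One cosmetic remark: your strategy uses the full history (the chosen representatives $W_n$), which is permitted in the Choquet game as defined here, so nothing is lost.
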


\begin{proof}
  We prove the lemma for $n=2$. Let $G=G_1\cup G_2$, where $G_i={\rm int}\overline{X_i}$, $i=1,2$. We notice that for every $i=1,2$ the space $\overline{X_i}$ is $\alpha$-favorable, since it contains dense $\alpha$-favorable subspace. Then $G_i$ is $\alpha$-favorable as an open subspace of the $\alpha$-favorable space $X_i$. It is easy to see that the union $G$ of two open $\alpha$-favorable subspaces is an $\alpha$-favorable space. Therefore, $X$ is $\alpha$-favorable, since $G$ is dense in $X$.
\end{proof}

Let $p=(x,y)\in\mathbb R^2$ and $\varepsilon>0$. We write
\begin{gather*}
B[p;\varepsilon)=[x,x+\varepsilon)\times [y,y+\varepsilon), \\
B(p;\varepsilon)=(x-\varepsilon,x+\varepsilon)\times (y-\varepsilon,y+\varepsilon).
\end{gather*}

If $A\subseteq \ss$ then the symbol $\css A$ ($\crr A$) means the closure of $A$ in the space $\ss$ ($\rr$).

We say that a space $X$ is {\it hereditarily  $\alpha$-favorable} if every its closed subspace is  $\alpha$-favorable.

\begin{theorem}\label{th:Sorgenfey-alpha-favorable}
For every $n\in\mathbb N$  the space  $\mathbb S^n$ is hereditarily $\alpha$-favorable.
\end{theorem}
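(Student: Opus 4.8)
The plan is to fix an arbitrary closed subspace $F$ of $\mathbb{S}^n$ and describe an explicit winning strategy for player $\alpha$ in the Choquet game on $F$; by definition this gives that $F$ is $\alpha$-favorable, and since $F$ was an arbitrary closed subspace, this is exactly the assertion that $\mathbb{S}^n$ is hereditarily $\alpha$-favorable. Throughout I write $B[p;\varepsilon)=\prod_{i=1}^n[x_i,x_i+\varepsilon)$ for $p=(x_1,\dots,x_n)$, the natural $n$-dimensional analogue of the boxes defined above, and I let $K=\mathrm{cl}_{\mathbb R^n}F$ denote the Euclidean closure of $F$.

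The backbone of the strategy is a corner-chasing argument. Whenever $\beta$ plays a nonempty relatively open $U_k\subseteq F$, player $\alpha$ selects a point $q_k\in U_k$ (the corner, which lies in $F$) and answers with $V_{k+1}=F\cap B[q_k;\delta_{k+1})$, choosing $\delta_{k+1}$ small enough that $V_{k+1}\subseteq U_k$, that $\delta_{k+1}\to 0$, and that in every coordinate the right endpoint of the box strictly decreases from the previous move. Since each $B[q_{k+1};\delta_{k+2})\subseteq B[q_k;\delta_{k+1})$, the corners $q_k$ are coordinatewise nondecreasing and bounded, hence converge to a point $p^*$; the strict decrease of the right endpoints forces $p^*\in B[q_k;\delta_{k+1})$ for every $k$, while $\delta_k\to 0$ forces $\bigcap_k B[q_k;\delta_{k+1})=\{p^*\}$. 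Consequently $\bigcap_k V_k=F\cap\{p^*\}$, and the entire problem reduces to arranging the choices so that $p^*\in F$. Note that $p^*\in K$ automatically, since the corners lie in $F\subseteq K$ and $K$ is Euclidean-closed; the difficulty is entirely that $K$ may be strictly larger than $F$.

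To force $p^*\in F$ I would analyze $F$ through its Euclidean closure, writing $F=K\setminus B$ with $B=K\setminus F$; because $F$ is closed in $\mathbb{S}^n$, the set $B$ is relatively open in $K$ in the Sorgenfrey sense, i.e. each point of $B$ carries a half-open box whose trace on $K$ misses $F$. On the part of $F$ lying in the Euclidean interior of $F$ relative to $K$ the strategy is easy: there $\alpha$ can pick a box whose Euclidean closure meets $K$ only inside $F$, so that $p^*\in\mathrm{cl}_{\mathbb R^n}\bigl(B[q_k;\delta_{k+1})\bigr)\cap K\subseteq F$. This already disposes, for instance, of every Euclidean-open piece, such as the half-plane $\{x+y<0\}$, which is Sorgenfrey-closed with all of its boundary outside $F$. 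The remaining, genuinely hard set is the boundary part $F_1=F\cap\mathrm{cl}_{\mathbb R^n}(B)$, where every box around a point of $F$ already meets $B$ and the interior trick is unavailable.

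The boundary part I would handle by induction on $n$ together with Lemma~\ref{union_of_alpha_favorable}. For $n=1$ the set $B$ is countable — its points have pairwise disjoint right-hand gaps — so $\alpha$ can diagonalize, devoting the $m$-th move to guaranteeing that $p^*$ differs from the $m$-th point of $B$ by shrinking the current box to lie strictly to the left of that point; this keeps $p^*\in K\setminus B=F$ and settles the base case. For $n\ge2$ the set $B$ can be uncountable (again the anti-diagonal boundary), so the countable diagonalization breaks down and must be replaced by a structural decomposition: I would split $F_1$, via the finite-union Lemma~\ref{union_of_alpha_favorable}, into finitely many closed subspaces, each of which is either Sorgenfrey-discrete or Sorgenfrey-homeomorphic to a closed subspace of a lower power $\mathbb{S}^m$ with $m<n$, and then apply the inductive hypothesis to each piece and glue the resulting strategies. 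The main obstacle is precisely this last reduction: making rigorous the sense in which $F$ can cling to the closure of its own bad set $B$ only along lower-dimensional Sorgenfrey behavior, so that the induction on $n$ and the finite-union lemma genuinely close the argument. Everything else — the corner-chasing, the convergence of $p^*$, and the interior case — is routine once this structural step is in hand.
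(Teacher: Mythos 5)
Your proposal contains a genuine gap, and it sits exactly where the theorem's content lies: the inductive step for $n\ge 2$. You assert that the boundary part $F_1=F\cap\mathrm{cl}_{\mathbb R^n}(B)$ can be split into finitely many closed subspaces, each Sorgenfrey-discrete or homeomorphic to a closed subspace of a lower power $\mathbb S^m$, but you never construct this decomposition, and there is no reason such a \emph{global} structure theorem should hold — the set $F$ can cling to $\mathrm{cl}_{\mathbb R^n}(B)$ in ways that mix behaviors at all scales, and you concede yourself that making this rigorous is ``the main obstacle.'' The paper avoids any global decomposition by making the dichotomy \emph{move-by-move inside the game}: when $\beta$ plays $U_m=F\cap\prod_{k}[a_{m,k},b_{m,k})$ with corner $a_m\in F$, either the open box $U_m^+=\prod_k(a_{m,k},b_{m,k})$ misses $F$, in which case $U_m$ is covered by the $n+1$ faces $\{a_{m,k}\}\times\prod_{i\ne k}[a_{m,i},b_{m,i})$, each meeting $F$ in a closed subspace of a copy of $\mathbb S^n$, so the inductive hypothesis together with Lemma~\ref{union_of_alpha_favorable} lets $\alpha$ win the remaining subgame; or $U_m^+\cap F\ne\emptyset$, and $\alpha$ answers with $F\cap\prod_k[a_{m,k},c_{m,k})$ for a point $c_m\in U_m^+\cap F$ with small diameter (isolated points of $U_m$, which you do not treat, are handled separately by playing a singleton).

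There is a second, related defect in your corner-chasing: your limit point $p^*$ is extracted from the \emph{lower} corners $q_k$, which increase to $p^*$ coordinatewise. Convergence from below is invisible to the Sorgenfrey topology, so $q_k\in F$ and Sorgenfrey-closedness of $F$ give you nothing about $p^*$ — which is precisely why you are driven into analyzing $B=K\setminus F$. The paper's key trick is to make the point of $F$ the \emph{upper} corner of the next box: since $x^*\le c_m$ coordinatewise and $c_m\to x^*$ in $\mathbb R^{n+1}$, the sequence $c_m$ converges to $x^*$ \emph{in} $\mathbb S^{n+1}$, and closedness of $F$ immediately yields $x^*\in F$, with no analysis of $K\setminus F$ needed in any dimension. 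To your credit, your base case $n=1$ is essentially correct and genuinely different from the paper's (the countability of $B$ via pairwise disjoint right-hand gaps, plus diagonalization, does work on the line), but it is exactly the argument that fails to scale, whereas the paper's one-dimensional proof is already the template — choose $c_m\in(a_m,b_m)\cap F$ with $c_m-a_m<\frac{1}{m+1}$ — that the induction repeats verbatim in higher dimensions.
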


\begin{proof}
  Let $n=1$ and $\emptyset\ne F\subseteq \mathbb S$. Assume that $\beta$ chose a nonempty open in $F$ set $U_0=[a_0,b_0)\cap F$, $a_0\in F$. If  $U_0$ has an isolated point $x$ in $\mathbb S$, then $\alpha$ chooses $V_1=\{x\}$ and wins. Otherwise, $\alpha$ put $V_1=[a_0,c_0)\cap F$, where $c_0\in (a_0,b_0)\cap F$ and $c_0-a_0<1$. Now let $U_1=[a_1,b_1)\cap F\subseteq V_1$ be the second turn of $\beta$ such that $a_1\in F$ and the set $(a_1,b_1)\cap F$ has no isolated points in $\mathbb S$. Then there exists $c_1\in (a_1,b_1)\cap F$ such that $c_1-a_1<\frac 12$. Let $V_2=[a_1,c_1)\cap F$. Repeating this process, we obtain sequences $(U_m)_{m=0}^\infty$, $(V_m)_{m=1}^\infty$ of open subsets of $F$ and sequences of points $(a_m)_{m=0}^\infty$, $(b_m)_{m=0}^\infty$ and $(c_m)_{m=1}^\infty$ such that $[a_m,b_m)\supseteq [a_m,c_m)\supseteq [a_{m+1},b_{m+1})$, $c_m-a_m<\frac{1}{m+1}$, $c_m\in F$, $U_m=[a_m,b_m)\cap F$ and $V_{m+1}=[a_m,c_m)\cap F$ for every $m=0,1,\dots$. According to the Nested Interval Theorem, the sequence $(c_m)_{m=1}^\infty$ is convergent in $\mathbb S$ to a point $x^*\in \bigcap\limits_{m=0}^\infty V_m$. Since $F$ is closed in $\mathbb S$,  $x^*\in F$. Hence, $F\cap\bigcap\limits_{m=0}^\infty V_m\ne\emptyset$. Consequently, $F$ is $\alpha$-favorable.

  Suppose that the theorem is true for all $1\le k\le n$ and prove it for $k=n+1$.

    Consider a set  $\emptyset\ne F\subseteq \mathbb S^{n+1}$. Let the player $\beta$ chooses a set $U_0=F\cap\prod\limits_{k=1}^{n+1}[a_{0,k},b_{0,k})$ with $a_0=(a_{0,k})_{k=1}^{n+1}\in F$.
Denote $U_0^+=\prod\limits_{k=1}^{n+1}(a_{0,k},b_{0,k})$ and consider the case $U_0^+\cap F=\emptyset$. For every  $k=1,\dots, n+1$ we set $U_{0,k}=\{a_{0,k}\}\times \prod\limits_{i\ne k}[a_{0,i},b_{0,i})$ and $F_{0,k}=F\cap U_{0,k}$. Since $U_{0,k}$ is homeomorphic to $\mathbb S^n$, by the inductive assumption the space $F_{0,k}$ is $\alpha$-favorable for every $k=1,\dots,n+1$. Then $F$ is $\alpha$-favorable according to Lemma~\ref{union_of_alpha_favorable}. Now let $U_0^+\cap F\ne\emptyset$. If there exists an isolated in $\mathbb S^{n+1}$ point $x\in U_0$, then  $\alpha$ put $V_1=\{x\}$ and wins. Assume $U_0$ has no isolated points in $\mathbb S^{n+1}$. Then there is $c_0=(c_{0,k})_{k=1}^{n+1}\in U_0^+\cap F$ such that ${\rm diam}(\prod\limits_{k=1}^{n+1}[a_{0,k},c_{0,k}))<1$. We put $V_1=F\cap \prod\limits_{k=1}^{n+1}[a_{0,k},c_{0,k})$. Let $U_1=F\cap \prod\limits_{k=1}^{n+1}[a_{1,k},b_{1,k})$ be the second turn of $\beta$ such that $a_1=(a_{1,k})_{k=1}^{n+1}\in F$ and $U_1\subseteq V_1$. Again, if $U_1^+\cap F=\emptyset$, where $U_1^+=\prod\limits_{k=1}^{n+1}(a_{1,k},b_{1,k})$, then, using the inductive assumption, we obtain that for every $k=1,\dots,n+1$ the space $F\cap \bigl(\{a_{1,k}\}\times \prod\limits_{i\ne k}[a_{1,i},b_{1,i})\bigr)$ is $\alpha$-favorable. Then $\alpha$ has a winning strategy in $F$ by Lemma~\ref{union_of_alpha_favorable}. If $U_1^+\cap F\ne\emptyset$ and $U_1$ has no isolated points in $\mathbb S^{n+1}$, the player $\alpha$ chooses a point $c_1=(c_{1,k})_{k=1}^{n+1}\in U_1^+\cap F$ such that ${\rm diam}(\prod\limits_{k=1}^{n+1}[a_{1,k},c_{1,k}))<1/2$ and put $V_2=F\cap \prod\limits_{k=1}^{n+1}[a_{1,k},c_{1,k})$. Repeating this process,  we obtain sequences of points $(a_m)_{m=0}^\infty$, $(b_m)_{m=0}^\infty$ and $(c_m)_{m=0}^\infty$, and of sets $(U_m)_{m=0}^\infty$ and $(V_m)_{m=1}^\infty$, which satisfy the following properties:
  \begin{enumerate}
    \item $U_m=F\cap \prod\limits_{k=1}^{n+1}[a_{m,k},b_{m,k})$;

    \item $a_m\in F$, $c_m\in U_m^+\cap F$;

    \item $V_{m+1}=F\cap \prod\limits_{k=1}^{n+1}[a_{m,k},c_{m,k})$;

    \item $V_{m+1}\subseteq U_m\subseteq V_m$;

    \item ${\rm diam} (V_{m+1})<\frac{1}{m+1}$
  \end{enumerate}
for every $m=0,1,\dots$. We observe that the sequence $(c_m)_{m=0}^\infty$ is convergent in $\mathbb R^{n+1}$ and $x^*=\lim\limits_{m\to\infty}c_m\in \bigcap\limits_{m=0}^\infty \overline{V_m}=  \bigcap\limits_{m=0}^\infty V_m$. Since $c_m\to x^*$ in $\mathbb S^{n+1}$, $c_m\in F$ and $F$ is closed in $\mathbb S^{n+1}$,  $x^*\in F\cap \bigl(\bigcap\limits_{m=0}^\infty V_m\bigr)$. Hence, $F$ is $\alpha$-favorable.
\end{proof}

\section{Every $C^*$-embedded subspace of $\ss$ is a hereditarily Baire subspace of $\mathbb R^2$.}

\begin{lemma}\label{char_zero_set}
 A set $E\subseteq \mathbb R^2$ is functionally closed in $\mathbb S^2$ if and only if
  \begin{enumerate}
    \item\label{cond:E_is_Gdelta} $E$ is $G_\delta$ in $\mathbb R^2$; and

    \item\label{cond:sep_from_R2-closed} if $F$ is $\mathbb R^2$-closed set disjoint from $E$, then $F$ and $E$ are completely separated in~$\mathbb S^2$.
  \end{enumerate}
\end{lemma}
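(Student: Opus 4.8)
The plan is to prove the two implications separately, and to push all of the genuine difficulty into a single statement about the Euclidean topology. The converse is the quick half. Assuming (1) and (2), I would write $E=\bigcap_{n}G_n$ with each $G_n$ open in $\rr$ and $G_{n+1}\subseteq G_n$, and set $F_n=\rr\setminus G_n$. Each $F_n$ is $\rr$-closed and disjoint from $E$, so by (2) there is a continuous $h_n:\ss\to[0,1]$ with $h_n|_E\equiv 0$ and $h_n|_{F_n}\equiv 1$. The function $h=\sum_{n}2^{-n}h_n$ is continuous on $\ss$ as a uniform limit, and $h(p)=0$ forces every $h_n(p)=0$; since a point outside $E$ lies in some $F_n$, we get $h^{-1}(0)=E$, so $E$ is functionally closed.

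For the forward implication write $E=f^{-1}(0)$ with $f:\ss\to[0,1]$ continuous. Condition (2) is then almost free: given an $\rr$-closed $F$ disjoint from $E$, the Euclidean distance $d(p)=\operatorname{dist}_{\rr}(p,F)$ is continuous on $\rr$, hence on $\ss$, and $d^{-1}(0)=F$. Thus $E$ and $F$ are two disjoint zero sets of $\ss$, and $\frac{f}{f+d}$ (well defined because $f+d$ can vanish only on $E\cap F=\emptyset$) is a continuous function on $\ss$ that equals $0$ on $E$ and $1$ on $F$. Hence $E$ and $F$ are completely separated.

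The whole weight of the lemma therefore sits in (1): a zero set of $\ss$ is $G_\delta$ in $\rr$. The tool I would build on is a one-sided continuity of $f$ for the Euclidean topology. Because the basic neighbourhoods are the corners $B[p;\varepsilon)$, continuity of $f$ at $p$ controls $f$ on the closed upper-right quadrant issued from $p$: if $q_k\to p$ in $\rr$ with each $q_k$ in that quadrant, then $q_k\in B[p;\varepsilon)$ eventually, so $f(q_k)\to f(p)$. Consequently, a point of $\crr E\setminus E$ (where $f>0$) can be approached by points of $E$ only through points having a strictly smaller first coordinate or a strictly smaller second coordinate. Since $\crr E$ is $\rr$-closed, hence $G_\delta$, and $E=\crr E\setminus(\crr E\setminus E)$, it suffices to prove that the exceptional set $\crr E\setminus E$ is $F_\sigma$ in $\rr$.

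This last point is where I expect the main obstacle. I would split $\crr E\setminus E$ into the part approached ``from the left'' and the part approached ``from below'', and try to make each an $F_\sigma$ set by reducing to the one-dimensional Sorgenfrey line, whose hereditary Lindel\"of property turns every Sorgenfrey-open subset of $\mathbb R$ into an $F_\sigma$ subset of $\mathbb R$. The difficulty is that the approaching zeros need not lie on a single horizontal or vertical line, so the one-dimensional information must be assembled across a continuum of lines into a genuine planar $F_\sigma$ set; carrying this out, presumably by using the upper-right continuity again to align the approaches and exploiting the separability (ccc) of $\ss$, is the step I would budget the most effort for.
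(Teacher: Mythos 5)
Your quick half and your verification of condition (\ref{cond:sep_from_R2-closed}) coincide with the paper's proof: the paper writes $\mathbb R^2\setminus E=\bigcup_{n=1}^\infty F_n$ with each $F_n$ Euclidean-closed, uses (\ref{cond:sep_from_R2-closed}) to get continuous $f_n:\mathbb S^2\to[0,1]$ with $E\subseteq f_n^{-1}(0)$ and $F_n\subseteq f_n^{-1}(1)$, and concludes $E=\bigcap_{n=1}^\infty f_n^{-1}(0)$ is functionally closed (your $\sum_n 2^{-n}h_n$ is the same argument); and it obtains (\ref{cond:sep_from_R2-closed}) from the observation that every $\mathbb R^2$-closed set is functionally closed in $\mathbb S^2$, which is your Euclidean distance-function computation. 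The genuine gap is exactly where you say you expect it: condition (\ref{cond:E_is_Gdelta}) in the necessity direction is asserted but not proved. The paper disposes of it in one line by citing \cite[Theorem 2.1]{Bade}: every continuous real-valued function on the Sorgenfrey plane is of Baire class one with respect to the Euclidean topology; then $E=f^{-1}(0)=\bigcap_{n}f^{-1}([-1/n,1/n])$ is $G_\delta$ in $\mathbb R^2$ because preimages of closed sets under Baire-one functions are $G_\delta$. This is a nontrivial theorem about $\mathbb S^2$ specifically, not a routine verification, so without either citing it or reproving it your argument for the forward direction is incomplete.

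Moreover, the route you sketch for closing the gap is in real danger, not merely unfinished. Assembling relatively $F_\sigma$ (or even relatively closed) sets across a continuum of horizontal and vertical lines cannot by itself yield a planar $F_\sigma$ or even a Borel set: for instance, \emph{every} subset of the anti-diagonal $\{(x,-x):x\in\mathbb R\}$ meets each horizontal line in at most one point, so it is a union over the lines of relatively closed sets, yet it can be non-Borel. Separability or the ccc of $\mathbb S^2$ gives no control here either, since $\mathbb S^2$ contains this uncountable closed discrete subspace. Any correct completion must exploit the joint (quadrant) continuity of $f$ globally rather than line by line --- and doing so is essentially the content of Bade's theorem, e.g.\ via approximating $f$ by the Baire-one functions $f_n(x,y)=f\bigl(\lceil nx\rceil/n,\lceil ny\rceil/n\bigr)$, which converge to $f$ by upper-right continuity, and then upgrading the resulting class estimate; this upgrading is the part that requires genuine work and is why the paper cites \cite{Bade} rather than arguing directly.
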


\begin{proof}
  {\it Necessity.} Let $f:\ss\to\mathbb R$ be a continuous function such that $E=f^{-1}(0)$. According to~\cite[Theorem 2.1]{Bade}, $f$ is a Baire-one function on~$\rr$. Consequently, $E$ is a  $G_\delta$ subset of $\rr$.

   Condition~(\ref{cond:sep_from_R2-closed}) follows from the fact that every $\mathbb R^2$-closed set is, evidently, a functionally closed subset of $\mathbb S^2$.

  {\it Sufficiency.} Since $E$ is $G_\delta$ in $\rr$, there exists a sequence of  $\mathbb R^2$-closed sets $F_n$ such that $X\setminus E=\bigcup\limits_{n=1}^\infty F_n$. Clearly, $E\cap F_n=\emptyset$. Then condition~(\ref{cond:sep_from_R2-closed}) implies that for every  $n\in\mathbb N$ there exists a continuous function $f_n:\mathbb S^2\to\mathbb R$ such that $E\subseteq f_n^{-1}(0)$ i $F_n\subseteq f^{-1}(1)$. Then $E=\bigcap\limits_{n=1}^\infty f_n^{-1}(0)$. Hence, $E$ is  functionally closed in~$\mathbb S^2$.
\end{proof}

\begin{lemma}\label{pr.3}
 Let $X$ be a metrizable space, $A\subseteq X$ be a set without isolated points and let $B\subseteq X$ be a countable set such that $A\cap B=\emptyset$. Then there exists a set $C\subseteq A$ without isolated points such that $\overline{C}\cap B=\emptyset$.
\end{lemma}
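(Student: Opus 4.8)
The plan is to build inside $A$ a countable tree of points, each approached by its children, with the radii of auxiliary balls shrinking quickly enough to steer the closure of the resulting set away from every point of $B$. Fix a compatible metric $d$ on $X$, and write $B_d(a,r)=\{x\in X:d(x,a)<r\}$. The case $A=\emptyset$ is trivial (take $C=\emptyset$), so assume $A\ne\emptyset$ and enumerate $B=\{b_n:n\in\mathbb N\}$. Since $A$ has no isolated points, every point of $A$ is an accumulation point of $A$, so inside any ball centred at a point of $A$ we can find infinitely many further points of $A$ converging to that centre; this is the only structural feature of $A$ I will use.

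I would index the construction by finite sequences $s\in\mathbb N^{<\omega}$, producing points $a_s\in A$ and radii $r_s>0$ as follows. Start from any $a_\emptyset\in A$ with $r_\emptyset=1$. Given $a_s$ and $r_s$, use that $a_s$ is an accumulation point of $A$ to choose distinct points $a_{s\frown k}\in(A\cap B_d(a_s,r_s))\setminus\{a_s\}$ with $a_{s\frown k}\to a_s$ as $k\to\infty$, and then pick radii $r_{s\frown k}>0$ so small that (a) $\overline{B_d(a_{s\frown k},r_{s\frown k})}\subseteq B_d(a_s,r_s)$, (b) $r_{s\frown k}\to0$ as $k\to\infty$, and (c) $\overline{B_d(a_{s\frown k},r_{s\frown k})}\cap\{b_1,\dots,b_{|s|+1}\}=\emptyset$. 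Requirement (c) is achievable because $a_{s\frown k}\in A$ is distinct from each $b_i$ and the set $\{b_1,\dots,b_{|s|+1}\}$ is finite. Set $C=\{a_s:s\in\mathbb N^{<\omega}\}$. This is a countable subset of $A$, and each $a_s$ is the limit of the distinct points $a_{s\frown k}\in C\setminus\{a_s\}$, so $C$ has no isolated points.

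It remains to show $\overline C\cap B=\emptyset$, and this is the only real difficulty: one cannot simply enclose $b_n$ in a single small ball missing $C$, because infinitely many branches of the tree may approach $b_n$ simultaneously. The remedy is to descend the tree along a given convergent sequence. Suppose, toward a contradiction, that $b_n\in\overline C$, and pick distinct $a_{s_j}\in C$ with $a_{s_j}\to b_n$. Consider the first coordinates of the $s_j$. If they are unbounded along some subsequence, then, since $a_{\emptyset\frown k}\to a_\emptyset$ and $r_{\emptyset\frown k}\to0$ as $k\to\infty$ while each $a_{s_j}$ lies in $\overline{B_d(a_{\emptyset\frown k_j},r_{\emptyset\frown k_j})}$, that subsequence converges to $a_\emptyset\in A$, forcing $b_n=a_\emptyset$, which is impossible since $A\cap B=\emptyset$. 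Hence some value $k_1$ occurs infinitely often, and along the corresponding subsequence $b_n\in\overline{B_d(a_{(k_1)},r_{(k_1)})}$. Iterating this dichotomy on the successive coordinates, after at most $n$ steps we either reach an equality $b_n=a_t$ with $a_t\in A$ (impossible as before), or we obtain $b_n\in\overline{B_d(a_t,r_t)}$ for some $t$ of length $n$; the latter contradicts (c), since that closed ball misses $\{b_1,\dots,b_n\}\ni b_n$. Thus $b_n\notin\overline C$ for every $n$, so $\overline C\cap B=\emptyset$. Note that this argument never uses completeness of $X$: it only analyses the given convergent sequence and terminates the recursion at the finite depth $n$.
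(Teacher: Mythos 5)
Your proof is correct, but it avoids $B$ by a genuinely different mechanism than the paper. The paper also builds a countable dense-in-itself $C\subseteq A$ by recursion, but it grows an increasing chain of \emph{finite} sets $A_1\subseteq A_2\subseteq\dots$ (each point of $A_n$ having a companion in $A_n$ within $\frac1n$, which yields the absence of isolated points in $C=\bigcup_n A_n$), and, crucially, it reserves at stage $n$ an open ball $V_n$ around $b_n$ that all points of $A_n$ and all later points avoid: new companions are drawn from the open-in-$A$ set $A\setminus\bigcup_{i\le n}\overline{V}_i$, which still has no isolated points. Consequently $C\cap V_n=\emptyset$ for every $n$, and $\overline{C}\cap B=\emptyset$ is immediate. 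In other words, the obstacle you flag as the ``only real difficulty'' --- that one cannot enclose $b_n$ in a single ball missing $C$ --- is an artifact of your tree construction, not of the problem: the paper's bookkeeping arranges exactly such a ball for each $b_n$. Your route pays for its locally simpler recursion (each node's ball need only avoid the finitely many $b_1,\dots,b_{|s|+1}$) with the delicate closure analysis at the end; that pigeonhole descent along coordinates is carried out correctly --- the unbounded case collapses the subsequence onto the parent $a_t\in A$ (impossible since $A\cap B=\emptyset$), the bounded case pushes $b_n$ into $\overline{B_d(a_t,r_t)}$ one level deeper, and the recursion provably stops by depth $n$ where condition (c) gives the contradiction; and, as you note, neither your argument nor the paper's uses completeness of $X$. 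The paper's version is shorter and makes the final verification one line; yours is a self-contained alternative whose per-step requirements are weaker, at the cost of the König-type endgame.
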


\begin{proof} Let $d$ be a metric on $X$, which generates its topological structure. For $x_0\in X$ and $r>0$ we denote $B(x_0,r)=\{x\in X:d(x,x_0)<r\}$ and $B[x_0,r]=\{x\in X:d(x,x_0)\le r\}$. Let $B=\{b_n:n\in\mathbb N\}$. We put
 $A_0=\emptyset$ and construct sequences $(A_n)_{n=1}^{\infty}$ and $(V_n)_{n=1}^{\infty}$ of nonempty finite sets $A_n\subseteq A$ and open neighborhoods $V_n$ of $b_n$ which for every $n\in\mathbb N$ satisfy the following conditions:
 \begin{gather}
A_{n-1}\subseteq A_{n};\label{eq:cond_on_sets1}\\
\forall x\in A_n \,\,\,\exists y\in A_n\setminus\{x\}\,\,\, \mbox{with } d(x,y)\leq \frac{1}{n};\label{eq:cond_on_sets2}\\
d(A_n,\bigcup_{1\leq i\leq n}V_i)>0.\label{eq:cond_on_sets3}
  \end{gather}
Let $A_1=\{x_1,y_1\}$, where $d(x_1,y_1)\le 1$ and $x_1\ne y_1$. We take $\varepsilon>0$ such that $A_1\cap B[b_1,\varepsilon]=\emptyset$ and put
$V_1=B(b_1,\varepsilon)$. Assume that we have already defined finite sets $A_1,\dots A_k$ and neighborhoods $V_1,\dots, V_k$ of $b_1,\dots,b_k$, respectively, which satisfy conditions~(\ref{eq:cond_on_sets1})--(\ref{eq:cond_on_sets3}) for every $n=1,\dots,k$. Let $A_k=\{a_1,\dots,a_{m}\}$, $m\in\mathbb N$. Taking into account that the set $D=A\setminus \bigcup\limits_{1\leq i\leq k}\overline{V}_i$ has no isolated points, for every  $i=1,\dots,m$ we take  $c_i\in D$ with $c_i\ne a_i$ and $d(a_i,c_i)\le \frac{1}{k+1}$. Put $A_{k+1}=A_k\cup \{c_1,\dots,c_m\}$. Take $\delta>0$ such that $A_{k+1}\cap B[b_{k+1},\delta]=\emptyset$. Let $V_{k+1}=B(b_{k+1},\delta)$. Repeating this process, we obtain needed sequences $(A_n)_{n=1}^\infty$ and $(V_n)_{n=1}^\infty$.

It remains to put $C=\bigcup\limits_{n=1}^\infty A_n$.
\end{proof}

The following results will be useful.

\begin{theorem}[\cite{GillJer}]\label{Gill-J}
  A subspace $E$ of a topological space  $X$ is $C^*$-embedded in $X$ if and only if every two disjoint functionally closed subsets of $E$ are completely separated in $X$.
\end{theorem}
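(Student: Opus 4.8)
The plan is to prove both implications directly, the forward one being immediate and the reverse one being a Tietze-type successive-approximation argument. For \emph{necessity}, suppose $E$ is $C^*$-embedded in $X$ and let $Z_1,Z_2$ be disjoint functionally closed subsets of $E$. Two disjoint functionally closed sets of a space are always completely separated \emph{inside} that space (if $Z_1=u^{-1}(0)$ and $Z_2=v^{-1}(0)$, then $|u|/(|u|+|v|)$ separates them), so there is a continuous $f:E\to[0,1]$ with $f\equiv 0$ on $Z_1$ and $f\equiv 1$ on $Z_2$. By $C^*$-embedding $f$ extends to a continuous $F:X\to\mathbb R$; composing with the retraction $r(t)=\min\{1,\max\{0,t\}\}$ yields $r\circ F:X\to[0,1]$, which still agrees with $f$ on $E$ and hence witnesses that $Z_1$ and $Z_2$ are completely separated in $X$.

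For \emph{sufficiency}, I assume the separation condition and let $f:E\to\mathbb R$ be bounded; after an affine rescaling I may take $f:E\to[-1,1]$. The key observation is that the sets $A=\{x\in E:f(x)\le-\tfrac13\}$ and $B=\{x\in E:f(x)\ge\tfrac13\}$ are disjoint functionally closed subsets of $E$, being the zero-sets of $\max\{f+\tfrac13,0\}$ and $\max\{\tfrac13-f,0\}$ respectively. By hypothesis they are completely separated in $X$, and from a separating function $h:X\to[0,1]$ the affine change $g=\tfrac13(2h-1)$ produces a continuous $g:X\to[-\tfrac13,\tfrac13]$ with $g\equiv-\tfrac13$ on $A$ and $g\equiv\tfrac13$ on $B$. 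A short case check (on $A$, on $B$, and on the middle slice $f^{-1}((-\tfrac13,\tfrac13))$) shows $|f-g|\le\tfrac23$ on all of $E$.

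The remaining step is the standard iteration. Applying the previous paragraph to the rescaled remainder $\tfrac32\bigl(f-g|_E\bigr):E\to[-1,1]$ and continuing, I produce continuous functions $g_n:X\to\mathbb R$ with $|g_n|\le\tfrac13(\tfrac23)^n$ on $X$ and $\bigl|f-\sum_{k=0}^{n}g_k\bigr|\le(\tfrac23)^{n+1}$ on $E$. Since the sup-norm bounds on $X$ are summable, the series $F=\sum_{n=0}^\infty g_n$ converges uniformly on $X$; its limit is therefore continuous, bounded (by $\sum\tfrac13(\tfrac23)^n=1$), and satisfies $F|_E=f$, so $f$ extends. I expect the only genuine obstacle to be the bookkeeping: one must choose the separating functions with the stated ranges so that the bounds hold \emph{on all of $X$} rather than merely on $E$, which is precisely what guarantees uniform convergence of the extension series on $X$ and hence continuity of the limit.
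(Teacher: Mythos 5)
Your proof is correct: the necessity direction (separate inside $E$, extend, truncate) and the sufficiency direction (the $\pm\tfrac13$ splitting with the $\tfrac23$-geometric Tietze iteration, with the sup-norm bounds holding on all of $X$) are both carried out accurately, including the bookkeeping on the constants. The paper itself gives no proof, citing Gillman--Jerison, and your argument is precisely the classical proof of Urysohn's extension theorem found there, so there is nothing to compare beyond noting the approaches coincide.
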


\begin{theorem}[\cite{Terasawa}]\label{strongly_zero_dim}
  The Sorgenfrey plane $\mathbb S^2$ is strongly zero-dimensional, i.e.,  for any completely separated sets $A$ and $B$ in $\ss$  there exists a clopen set $U\subseteq \ss$ such that $A\subseteq U\subseteq \ss\setminus B$.
\end{theorem}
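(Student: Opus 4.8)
The plan is to reduce the statement to the clopen separation of two disjoint zero-sets and then to build the required clopen set by hand from the clopen box base of $\ss$. By definition, if $A$ and $B$ are completely separated there is a continuous $f:\ss\to[0,1]$ with $A\subseteq Z_0:=f^{-1}(0)$ and $B\subseteq Z_1:=f^{-1}(1)$, and any clopen $U$ with $Z_0\subseteq U\subseteq\ss\setminus Z_1$ automatically satisfies $A\subseteq U\subseteq\ss\setminus B$. So it suffices to clopen-separate the disjoint zero-sets $Z_0,Z_1$. First I would record the cheap half: since $\ss$ has a base of clopen boxes $B[p;\varepsilon)$, the Sorgenfrey-open sets $\{f<\tfrac12\}$ and $\{f>\tfrac12\}$ are each a union of clopen boxes and give a disjoint \emph{open} separation of $Z_0$ and $Z_1$. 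Thus disjoint open separation is free; the whole difficulty is concentrated in the level set $L=\{f=\tfrac12\}$.

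The main obstacle is that a single threshold does not yield a clopen set. Indeed, $U:=\{f<\tfrac12\}\cup S$ with $S\subseteq L$ is clopen only if every $p\in S$ carries a box meeting neither $\{f>\tfrac12\}$ nor $L\setminus S$, and every $p\in L\setminus S$ carries a box meeting neither $\{f<\tfrac12\}$ nor $S$; but a point $p\in L$ at which $f$ oscillates across $\tfrac12$ in every box $B[p;\varepsilon)$ (which Sorgenfrey-continuity permits, since $f(p)=\tfrac12$) can be placed on neither side. Hence the separating clopen set must weave around $L$ rather than follow a level, and this is exactly where the non-normality of $\ss$ bites. It is worth stressing that this is compatible with strong zero-dimensionality only because we separate \emph{completely separated} sets: the non-normality of $\ss$ is witnessed by disjoint closed subsets of the antidiagonal that are \emph{not} completely separated, so such configurations never arise here.

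To control the weaving I would combine two regularity features. On one hand, Bade's theorem \cite{Bade} (already used in Lemma~\ref{char_zero_set}) shows $f$ is Baire-one on $\rr$, so $Z_0$, $Z_1$ and $L$ are $\rr$-Borel of low class, which pins down how $L$ sits in the Euclidean plane. On the other hand, Sorgenfrey-continuity gives directional control: near any point the values of $f$ are dictated by the upper-right quadrant, so $Z_0$ cannot accumulate on a point of $Z_1$ from that quadrant. Using these I would attempt a transfinite recursion over a well-ordered clopen base, at each step adjoining one clopen box to a growing ``$U$-side'' or to a growing ``complement-side,'' maintaining the invariant that the two sides stay disjoint, each a union of clopen boxes, and contain the processed parts of $Z_0$ and $Z_1$ respectively; then $U$ is open with open complement, i.e.\ clopen. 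I expect the decisive difficulty to be precisely the bookkeeping of this recursion: since $\ss$ is separable but neither second countable, Lindel\"of, nor normal, no routine exhaustion applies, and one must guarantee simultaneously that the two families never collide and that their union exhausts $\ss$. Verifying that the limiting complement (and not merely $U$) is open is the crux, and this careful construction is the real substance of Terasawa's theorem.
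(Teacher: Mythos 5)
Your proposal contains a genuine gap, and you concede it yourself: after the (correct and standard) reduction from completely separated sets to disjoint zero-sets $Z_0=f^{-1}(0)$, $Z_1=f^{-1}(1)$, and a correct diagnosis of why a level set $\{f=\tfrac12\}$ obstructs any single-threshold construction, the actual separation is never carried out. The ``transfinite recursion over a well-ordered clopen base'' is not an argument but a wish list: you state the invariants (the two growing families of boxes stay disjoint, cover the processed parts of $Z_0$ and $Z_1$, and exhaust $\mathbb S^2$) without giving any rule that could maintain them. Concretely: (i) no criterion is given for deciding which side a box joins, and a greedy rule deadlocks as soon as a box meets the closures of both sides — precisely the oscillation configuration on $L$ that you yourself identified; (ii) nothing addresses points of $L$ never captured by either family, so exhaustion fails; (iii) the openness of the limiting complement, which you correctly flag as ``the crux,'' is exactly the point where no idea is supplied. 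Moreover, the two regularity inputs you invoke — Bade's theorem that $f$ is Baire-one on $\mathbb R^2$, and the directional (upper-right quadrant) control from Sorgenfrey continuity — are announced but never used in any step of the recursion, so they do no mathematical work in the text as written.

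For comparison: the paper does not prove this statement at all; it imports it from Terasawa's article \cite{Terasawa}, whose proof rests on a specific structural analysis of products of Sorgenfrey-type lines rather than on an abstract exhaustion argument. So there is no routine bookkeeping to fix here — the missing recursion step \emph{is} the theorem, and your text, while a sensible orientation (the reduction to zero-sets and the observation that non-normality of $\mathbb S^2$ is witnessed only by sets that are not completely separated are both correct), does not constitute a proof of it.
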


Recall that a space  $X$ is {\it hereditarily Baire} if every its closed subspace is Baire.

\begin{theorem}\label{th:C-emb-imply-Baire}
  Let $E$ be a $C^*$-embedded subspace of $\mathbb S^2$. Then $E$ is a hereditarily Baire subspace of $\mathbb R^2$.
\end{theorem}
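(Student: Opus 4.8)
The plan is to argue by contraposition: assuming $E$ fails to be a hereditarily Baire subspace of $\rr$, I will produce two disjoint functionally closed subsets of $E$ that are not completely separated in $\ss$, which contradicts $C^*$-embeddedness by Theorem~\ref{Gill-J}. First I would reduce to a canonical non-Baire object. If $E$ is not hereditarily Baire in $\rr$, then by the classical Hurewicz characterization of hereditarily Baire metrizable spaces the subspace $E$ contains a set $D$ which is closed in $E$ and homeomorphic to the rationals; thus $D$ is countable and dense-in-itself. Put $L=\crr D$ and $P=L\setminus D$. Since $D$ is closed in $E$ we have $P\cap E=\emptyset$, and since $D$ is a countable dense subset of the Polish space $L$, the set $P$ is a dense $G_\delta$ (hence comeager) in $L$; in particular $L$ is perfect and $P$ is uncountable.

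Next I would split $D$ into two disjoint pieces $A_0,A_1$, each dense in $L$ (possible because $D$ is homeomorphic to $\mathbb Q$), and arrange that each $A_i$ is functionally closed in $E$. To realize the latter I would use Lemma~\ref{char_zero_set}: it suffices to find sets $Z_0,Z_1$ that are functionally closed in $\ss$ with $Z_i\cap E=A_i$, and for this one needs the $A_i$ to be $G_\delta$ in $\rr$ and suitably separated from the $\rr$-closed sets disjoint from them. Here Lemma~\ref{pr.3} enters: exploiting that $\css$-closures are strictly smaller than $\crr$-closures, I would thin $D$ and choose the two halves so that their $\css$-closures avoid each other and avoid prescribed countable ``bad'' sets, which is exactly the kind of control Lemma~\ref{pr.3} provides (a dense-in-itself subset whose closure misses a given countable set). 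This is the step where the real work lies.

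With $A_0,A_1$ in hand I would prove they are not completely separated in $\ss$. Suppose they were; by strong zero-dimensionality (Theorem~\ref{strongly_zero_dim}) there would be a clopen $U\subseteq\ss$ with $A_0\subseteq U$ and $A_1\cap U=\emptyset$. Its indicator $\chi_U:\ss\to\mathbb R$ is continuous, hence, by the theorem of Bade used in the proof of Lemma~\ref{char_zero_set}, it is a Baire-one function on $\rr$. Restricting $\chi_U$ to the Polish space $L$ keeps it Baire-one, so its set of continuity points is comeager in $L$; intersecting with the comeager set $P$ yields a continuity point $p\in P$. Because $A_0$ and $A_1$ are both dense in $L$ and $p\notin D$, the point $p$ is a genuine $\crr$-limit of points of $A_0$ and of points of $A_1$; approaching $p$ inside $L$ through $A_0\subseteq U$ forces $\chi_U(p)=1$, while approaching through $A_1\subseteq\ss\setminus U$ forces $\chi_U(p)=0$ --- contradicting continuity of $\chi_U|_L$ at $p$. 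Hence $A_0$ and $A_1$ are not completely separated, and Theorem~\ref{Gill-J} shows $E$ is not $C^*$-embedded.

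I expect the main obstacle to be the middle step: manufacturing $A_0$ and $A_1$ that are simultaneously (a) dense in $L$, so that the category argument forces the contradiction at a point of $P$, and (b) genuinely functionally closed in the (non-normal) subspace $E$. The density in (a) is free from $D\cong\mathbb Q$, but (b) cannot be read off from $D$ alone and must be engineered through the directional, half-open box structure of $\ss$ together with the closure control furnished by Lemma~\ref{pr.3} and the zero-set criterion of Lemma~\ref{char_zero_set}. By contrast, the concluding Baire-category argument is robust: it needs only that a $\css$-continuous function is $\rr$-Baire-one and that $P$ is comeager in $L$, so it applies uniformly and, in particular, does not require $D$ to accumulate on $P$ in the Sorgenfrey sense.
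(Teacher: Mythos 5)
Your skeleton matches the paper's (a closed countable crowded set from Debs' theorem, a two-piece dense split, Gillman--Jerison plus strong zero-dimensionality, and a Baire-category contradiction inside the Euclidean closure), and your endgame via Bade's theorem and continuity points of a Baire-one function is a correct variant of the paper's ``two disjoint dense $G_\delta$-sets in a Baire space'' contradiction (the paper instead reads the $G_\delta$-property of the clopen sets $U,V$ off Lemma~\ref{char_zero_set}). But the middle step, which you yourself flag as ``where the real work lies,'' contains a genuine gap, and Lemma~\ref{pr.3} alone cannot fill it. That lemma only controls Euclidean closures against a prescribed countable set; it provides no mechanism for making both halves $A_0,A_1$ functionally closed (or even $\ss$-closed) in $E$ while keeping both $\rr$-dense in $L=\crr D$. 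In fact the split you ask for can be outright impossible for the original $D$: since $D=A_0\sqcup A_1$ with both pieces $\ss$-closed forces both to be $\ss$-clopen in $D$, consider a copy of $\mathbb Q$ lying Sorgenfrey-crowded, e.g.\ $D=\{(q,q):q\in\mathbb Q\}$ in the main diagonal (a legitimate copy of $\mathbb Q$, as the diagonal carries the Sorgenfrey line topology and the Sorgenfrey rationals are countable, metrizable and crowded). Any $\ss$-open $A_0\ni p$ in $D$ contains $B[p;\varepsilon)\cap D$ for some $\varepsilon>0$, so $A_1$ misses a whole relatively open arc of $L$ and cannot be $\rr$-dense in $L$. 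So one cannot split the copy of $\mathbb Q$ itself; the set to be split must first be manufactured.

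The missing idea is precisely the paper's Section~2 input, Theorem~\ref{th:Sorgenfey-alpha-favorable}: $\ss$ is hereditarily $\alpha$-favorable, so the closed countable crowded set $E_0$ is a Baire space in the \emph{Sorgenfrey} topology, whence its $\ss$-isolated points form an $\ss$-dense, hence $\rr$-dense, subset $G$ of $E_0$. Only then does Lemma~\ref{pr.3} enter, applied inside $G$ against the countable set $E_0'$ of $\ss$-nonisolated points, yielding $C\subseteq G$ crowded in $\rr$ with $\crr C\cap E_0'=\emptyset$; the set $F=\crr C\cap E_0$ is then simultaneously $\ss$-discrete (so \emph{every} partition of $F$ into two $\rr$-dense pieces consists of relatively clopen sets, and the split is free), countable (so $z$-embedded in $E$), and $\rr$-closed in $E$ (so functionally closed in $E$, hence well-embedded), which by Theorem~\ref{th:Blair-Hager} makes $F$ $C$-embedded in $E$ and therefore $C^*$-embedded in $\ss$; the category contradiction then runs in $\crr F$ rather than in $L$. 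Your alternative realization of the $A_i$ as zero-sets of $E$ (via $z$-embeddedness of a countable set together with $D$ being functionally closed in $E$) would work once such a clopen, dense-in-both-halves split exists; but without the Sorgenfrey-discreteness supplied by Theorem~\ref{th:Sorgenfey-alpha-favorable} no such split need exist, so as written the proof does not go through.
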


\begin{proof}
Assume that $E$ is not $\mathbb R^2$-hereditarily Baire space and take an $\mathbb R^2$-closed countable subspace $E_0$ without $\mathbb  R^2$-isolated point (see~\cite{Debs}). Notice that $E$ is $\mathbb S^2$-closed according to \cite[Corollary 2.3]{Tanaka}. Therefore, $E_0$ is $\mathbb S^2$-closed set. By Theorem~\ref{th:Sorgenfey-alpha-favorable} the space $E_0$ is $\alpha$-favorable, and, consequently, $E_0$ is a Baire subspace of $\ss$.

Let $E_0'$ be a set of all $\mathbb S^2$-nonisolated points of $E_0$. Since $E_0'$ is the set of the first category in $\mathbb S^2$-Baire space
  $E_0$, the set $G=E_0\setminus E_0'$ is $\mathbb S^2$-dense open discrete subspace of $E_0$. We notice that $G$ is $\mathbb  R^2$-dense subspace of $E_0$. By Lemma~\ref{pr.3} there exists a set $C\subseteq G$ without $\mathbb R^2$-isolated point such that $\crr C\cap
  E_0'=\emptyset$. We put $F=\crr C\cap E_0$.

  Let $A$ and $B$ be any $\mathbb R^2$-dense in $F$ disjoint sets such that $F=A\cup B$. Evidently $A$ and $B$ are clopen subsets of $F$, since $F$ is $\mathbb S^2$-discrete space. Notice that $F$ is  $z$-embedded in $E$, because $F$ is countable.  Moreover, $F$ is $\mathbb
  R^2$-closed in $E$. Hence,  $F$ is $\mathbb S^2$-functionally closed in $E$. By Theorem~\ref{th:Blair-Hager} the set $F$ is $C$-embedded in $C^*$-embedded in $\ss$ set $E$. Consequently, $F$ is $C^*$-embedded in $\mathbb S^2$. Therefore, Theorem~\ref{Gill-J} and Theorem~\ref{strongly_zero_dim} imply that there exist disjoint clopen set $U,V\subseteq \mathbb S^2$ such that $A=U\cap F$
  and $B=V\cap F$. According to Lemma~\ref{char_zero_set} the sets $U$ and $V$ are $G_\delta$ in $\mathbb R^2$. Let $D=\crr F$. Then $U\cap D$ and $V\cap D$ are  $\mathbb R^2$-dense in $D$ disjoint  $G_\delta$-sets, which contradicts to the baireness of $D$.
\end{proof}

\section{Every discrete $C^*$-embedded subspace of $\ss$ is a countable $G_\delta$-subspace of~$\rr$.}

\begin{lemma}\label{lemma:uncount} Let $X$ be a metrizable separable space and $A\subseteq X$ be an uncountable set. Then there exists a set $Q\subseteq A$ which is homeomorphic to the set $\mathbb Q$ of all rational numbers.
\end{lemma}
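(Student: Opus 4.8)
The plan is to produce inside $A$ a countable, dense-in-itself subspace and then invoke Sierpiński's classical characterization of the rationals: \emph{every nonempty countable metrizable space without isolated points is homeomorphic to $\mathbb Q$}. Since $X$ is separable metrizable it is second countable, so every subspace of it is again separable metrizable; thus once I have a countable dense-in-itself $Q\subseteq A$, it will automatically be metrizable and Sierpiński's theorem will finish the proof. The whole task therefore reduces to locating such a $Q$.

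First I would pass from $A$ to its set of condensation points. Fixing a countable base $\mathcal B$ of $X$, let $A^*$ be the set of those $x\in A$ for which every neighbourhood of $x$ meets $A$ in an uncountable set. The points of $A\setminus A^*$ are precisely those covered by some basic open $B\in\mathcal B$ with $B\cap A$ countable; as this exhibits $A\setminus A^*$ as a countable union of countable sets, $A\setminus A^*$ is countable, and hence $A^*$ is uncountable, in particular nonempty. The same counting argument, applied locally, shows that $A^*$ has no isolated points: for $x\in A^*$ and any neighbourhood $U$ of $x$ the set $U\cap A$ is uncountable while $U\cap(A\setminus A^*)$ is countable, so $U\cap A^*$ is uncountable and in particular contains a point other than $x$. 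Thus $A^*$ is a nonempty (indeed uncountable) separable metric space that is dense-in-itself.

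Next I would take a countable dense subset $Q$ of $A^*$ (available by separability) and check that $Q$ inherits the absence of isolated points. Given $q\in Q$ and an open $U\ni q$, the open set $U\cap A^*$ is not a singleton because $A^*$ has no isolated points; choosing $z\in U\cap A^*$ with $z\neq q$ and, by the Hausdorff property, an open $V$ with $z\in V\subseteq U$ and $q\notin V$, density of $Q$ in $A^*$ yields a point of $Q$ inside $V\subseteq U$ distinct from $q$. Hence $q$ is not isolated in $Q$, so $Q$ is a nonempty countable metrizable space with no isolated points, and $Q\cong\mathbb Q$ by Sierpiński's theorem.

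I expect the only genuinely delicate point to be the verification that passing to a countable dense subset preserves the property of having no isolated points; everything else is a routine condensation-point count. The reliance on Sierpiński's characterization is the conceptual key: it converts the topological goal ``homeomorphic to $\mathbb Q$'' into the purely internal conditions \emph{countable}, \emph{metrizable}, and \emph{dense-in-itself}, all three of which the construction above secures.
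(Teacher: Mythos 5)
Your proposal is correct and follows essentially the same route as the paper: remove the countably many non-condensation points, take a countable dense subset of the remaining dense-in-itself set, and apply Sierpi\'{n}ski's characterization of $\mathbb Q$. Your verification that the countable dense subset inherits the absence of isolated points is the step the paper states without detail, and your argument for it is sound.
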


\begin{proof}
  Let $A_0$ be the set of all points of $A$ which are not condensation points $A$ (a point $a\in X$ is called {\it a condensation point of $A$ in
  $X$} if every neighborhood of $a$ contains uncountably many elements of $A$). Notice that  $A_0$ is countable, since $X$ has a countable base. Put  $B=A\setminus A_0$. Then the inequality $|A|>\aleph_0$ implies that every point of  $B$ is a condensation point of $B$. Take a countable subset  $Q\subseteq B$ which is dense in $B$. Clearly, every point of $Q$ is not isolated. Hence,  $Q$ is homeomorphic to $\mathbb Q$ by the Sierpi\'{n}ski Theorem~\cite{Serp}.
\end{proof}

\begin{lemma}\label{lemma:z-emb-imply-isol}
  Let $E$ be an $\mathbb R^2$-hereditarily Baire $z$-embedded subspace of $\mathbb S^2$. Then the set $E^0$ of all isolated points of  $E$ is at most countable.
\end{lemma}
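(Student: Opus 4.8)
The plan is to argue by contradiction, producing inside $E$ a copy of $\mathbb Q$ that is simultaneously ``relatively $G_\delta$'' and ``topologically large'', which is impossible in a Baire space. So suppose $E^0$ is uncountable. Since $E^0\subseteq\rr$ is an uncountable subset of the separable metrizable space $\rr$, Lemma~\ref{lemma:uncount} yields a set $Q\subseteq E^0$ which is homeomorphic, in the topology of $\rr$, to $\mathbb Q$. In particular $Q$ is countable and has no $\rr$-isolated points, so that $D:=\crr Q$ is a crowded (perfect) $\rr$-closed set and $Q$ is $\rr$-dense in $D$.

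The engine of the proof is the following. Suppose we have arranged that $Q$ is functionally closed in the subspace $E$ of $\ss$ (this is the delicate point, discussed below). Since $E$ is $z$-embedded in $\ss$, there is a functionally closed set $\widetilde Q$ in $\ss$ with $\widetilde Q\cap E=Q$, and by Lemma~\ref{char_zero_set} the set $\widetilde Q$ is $G_\delta$ in $\rr$. Put $L:=\crr Q\cap E=D\cap E$. Then $L$ is $\rr$-closed in $E$, so, $E$ being $\rr$-hereditarily Baire, $L$ is a Baire space; moreover $L$ is crowded, for an $\rr$-isolated point of $L$ would be an $\rr$-isolated point of the $\rr$-dense subset $Q$, contradicting $Q\cong\mathbb Q$. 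Now $Q=\widetilde Q\cap L$ is a $G_\delta$ subset of $L$ (in the topology of $\rr$) which is $\rr$-dense in $L$, hence comeager in the Baire space $L$; but $Q$ is a countable subset of the crowded space $L$, hence meager in $L$. A nonempty Baire space cannot have a subset that is both comeager and meager, a contradiction. Thus $E^0$ is at most countable.

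The step I expect to be the main obstacle is establishing that a suitable copy of $\mathbb Q$ drawn from $E^0$ is functionally closed in $E$. Every point of $Q$ lies in $E^0$ and is therefore $\ss$-isolated in $E$, so the only obstruction is that some non-isolated point of $E$, i.e. some point of $N:=E\setminus E^0$, be an $\ss$-limit of $Q$; if no such point exists, the map sending $Q$ to $0$ and $E\setminus Q$ to $1$ is continuous and exhibits $Q$ as functionally closed in $E$. Since $\ss$-limits are $\rr$-limits, the candidate obstructing points all lie in $D\cap N$, and the natural tool to remove them is Lemma~\ref{pr.3}, which lets one shrink $Q$ to a still crowded subset $C$ with $\crr C\cap N=\emptyset$ \emph{provided $D\cap N$ is countable}. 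The difficulty is that $N$ may be $\ss$-uncountable and $\ss$-discrete (this already happens on the anti-diagonal $\mathbb D$), so one cannot avoid all of $N$ at once. The way around this that I would pursue is a preliminary Baire-category analysis inside $L$: one locates a relatively $\rr$-open piece of $L$ meeting $N$ in a countable set (for instance a set on which $N$ is nowhere dense, extracting from it a countable dense trace), performs the extraction of the copy of $\mathbb Q$ there, and only then applies Lemma~\ref{pr.3} and the engine above.
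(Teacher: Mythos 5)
Your ``engine'' is sound and is essentially the paper's own contradiction (the paper phrases it as: $Q$ is a $G_\delta$-subset of the hereditarily Baire space $E$, hence Baire, which is absurd for a copy of $\mathbb Q$; your localization to $L=\crr Q\cap E$ is a correct, slightly more detailed version of the same step). But the part you yourself flag as the main obstacle --- arranging that the copy of $\mathbb Q$ is ($\ss$-)closed, hence clopen and functionally closed, in $E$ --- is a genuine gap, and the workaround you sketch does not close it. Nothing in the hypotheses produces a relatively $\rr$-open piece of $L$ meeting $N=E\setminus E^0$ in a countable set: $N$ need only be $\ss$-closed in $E$, not $\rr$-closed, and it can meet every portion of $\crr Q$ uncountably (a Bernstein-type configuration), in which case no such piece exists, Lemma~\ref{pr.3} is never applicable (it needs the obstructing set to be countable), and your plan stalls. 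So as written the proposal proves the lemma only under an unjustified auxiliary claim.

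The idea you are missing is the perfectness of the Sorgenfrey plane, which lets one sidestep the set $N$ entirely rather than avoid it. By~\cite{HM} the space $\ss$ is perfect, so the set $E^0$, being open in $E$, is $F_\sigma$ in $E$: write $E^0=\bigcup_{n=1}^\infty E_n$ with each $E_n$ closed in $E$, and pick $N\in\mathbb N$ with $E_N$ uncountable. Apply Lemma~\ref{lemma:uncount} inside $E_N$ (not inside all of $E^0$) to get $Q\subseteq E_N$ homeomorphic to $\mathbb Q$. Now $Q$ is automatically clopen in $E$: it is open because its points are isolated in $E$, and it is closed because any $\ss$-limit of $Q$ in $E$ lies in the closed set $E_N$, whose points are all isolated in $E$, so the closure of $Q$ in $E$ is $Q$ itself ($E_N$ is a discrete subspace, in which every subset is clopen). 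Hence $Q$ is functionally closed in $E$ with no countability hypothesis on $N$ needed, and your engine then finishes the proof exactly as in the paper: $z$-embedding gives a functionally closed $Q_1\subseteq\ss$ with $Q=Q_1\cap E$, Lemma~\ref{char_zero_set} makes $Q_1$ a $G_\delta$ in $\rr$, and the Baire-category contradiction follows.
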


\begin{proof}
   Assume $E^0$ is uncountable. Notice that $E^0$ is an $F_\sigma$-subset of $E$, since $E^0$ is an open subset of $E$ and $\mathbb   S^2$ is a perfect space by~\cite{HM}. Then $E^0=\bigcup\limits_{n=1}^\infty E_n$, where every set  $E_n$ is closed in $E$. Take  $N\in\mathbb N$ such that  $E_N$ is uncountable. According to Lemma~\ref{lemma:uncount} there exists a set $Q\subseteq E_N$ which is homeomorphic to $\mathbb Q$. Since $Q$ is clopen in  $E_N$ and $E_N$ is a clopen subset of a $z$-embedded in $\ss$ set $E$, there exists a functionally closed subset  $Q_1$ of $\mathbb S^2$ such that   $Q=E\cap Q_1$. By Lemma~\ref{char_zero_set} the set $Q_1$ is a $G_\delta$-set in $\mathbb R^2$. Then $Q$ is a $G_\delta$-subset of a hereditarily Baire space $E$. Hence, $Q$ is a Baire space, a contradiction.
\end{proof}

\begin{theorem}\label{cor:discr1}
  If $E$ is a discrete $C^*$-embedded subspace of $\ss$, then $E$ is a countable $G_\delta$-subspace of~$\rr$.
\end{theorem}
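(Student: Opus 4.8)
The plan is to assemble the pieces prepared in the previous two sections, the only genuinely new input being a descriptive-set-theoretic dichotomy at the very end. First I would record the routine implication that a $C^*$-embedded subspace is $z$-embedded: if $Z=g^{-1}(0)$ is a functionally closed set of $E$ with $g\colon E\to[0,1]$ continuous, then, since $E$ is $C^*$-embedded and $g$ is bounded, $g$ extends to a continuous $\bar g\colon\ss\to\mathbb R$; the function $\min(|\bar g|,1)$ shows that $\bar g^{-1}(0)$ is functionally closed in $\ss$, and $\bar g^{-1}(0)\cap E=g^{-1}(0)=Z$ because $\bar g|_E=g$. Hence $E$ is $z$-embedded in $\ss$. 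At the same time, Theorem~\ref{th:C-emb-imply-Baire} gives that $E$ is a hereditarily Baire subspace of $\rr$.

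Next I would extract countability. Both hypotheses of Lemma~\ref{lemma:z-emb-imply-isol} are now available, so that lemma shows the set $E^0$ of isolated points of $E$ is at most countable. Since $E$ is discrete, every point of $E$ is isolated, whence $E=E^0$ and $E$ is countable. In particular $E$ is an $F_\sigma$, and therefore a Borel, subset of $\rr$.

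It remains to prove that $E$ is $G_\delta$ in $\rr$, and this is the step where I expect the real work. I would argue that hereditary baireness forbids a copy of the rationals inside $E$: if $Q\subseteq E$ were closed in $E$ and homeomorphic to $\mathbb Q$, then $Q$ would be a closed, hence Baire, subspace of the hereditarily Baire space $E$, contradicting the fact that $\mathbb Q$ is not Baire. Thus $E$ contains no closed copy of $\mathbb Q$. By the Hurewicz theorem (a Borel subset of a Polish space is $G_\delta$ if and only if it contains no closed copy of $\mathbb Q$), the countable --- hence Borel --- set $E$ is therefore $G_\delta$ in $\rr$, completing the proof.

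The main obstacle is precisely this last passage, from \emph{hereditarily Baire and countable} to \emph{$G_\delta$ in $\rr$}: the two easy halves (hereditary baireness from Theorem~\ref{th:C-emb-imply-Baire}, countability from Lemma~\ref{lemma:z-emb-imply-isol}) are already in place, but the Hurewicz dichotomy is doing the essential work. A more self-contained alternative would be to imitate the proof of Lemma~\ref{lemma:z-emb-imply-isol}: from a hypothetical failure of the $G_\delta$ property one would try to produce inside $E$ a clopen, hence $z$-embedded, copy of $\mathbb Q$, which by Lemma~\ref{char_zero_set} would be $G_\delta$ and therefore Baire in the hereditarily Baire space $E$ --- a contradiction. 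However, manufacturing such an explicit copy of $\mathbb Q$ out of the mere negation of $G_\delta$-ness is exactly the delicate content of Hurewicz's theorem, so I would not expect this route to be shorter.
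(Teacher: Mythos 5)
Your proof is correct, and up to its last step it is exactly the paper's argument: the paper likewise combines Theorem~\ref{th:C-emb-imply-Baire} with Lemma~\ref{lemma:z-emb-imply-isol} to get that $E$ is a countable, $\rr$-hereditarily Baire set (the lemma's $z$-embeddedness hypothesis, which the paper leaves implicit, is supplied by the standard implication $C^*$-embedded $\Rightarrow$ $z$-embedded that you correctly verify via extension of the bounded function $g$). Where you diverge is the final passage from \emph{countable hereditarily Baire} to \emph{$G_\delta$ in $\rr$}: the paper disposes of it by citing \cite[Proposition 12]{Ka}, whereas you derive it from the classical Hurewicz dichotomy, first noting that hereditary Baireness forbids a relatively closed copy of $\mathbb Q$ in $E$ (your reading of ``closed copy'' as closed \emph{in} $E$ is the right one, and countability makes $E$ an $F_\sigma$, hence Borel, so the dichotomy applies). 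The two routes are equivalent in substance --- the Kalenda--Spurn\'y proposition is in effect the countable case of Hurewicz's theorem --- so your version buys self-containedness modulo one well-known descriptive-set-theoretic theorem, while the paper's citation is a tailor-made result proved in a more general topological setting; your closing observation that the Hurewicz dichotomy is where the genuine work hides is accurate, and it is consonant with the paper's own concluding Remark, which identifies countable $G_\delta$ subsets of $\rr$ with the scattered ones by essentially the same ``no copy of $\mathbb Q$'' argument.
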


\begin{proof}
  Theorem~\ref{th:C-emb-imply-Baire} and Lemma~\ref{lemma:z-emb-imply-isol} imply that $E$ is a countable hereditarily Baire subspace of $\rr$. According to~\cite[Proposition 12]{Ka} the set $E$ is  $G_\delta$ in $\rr$.
\end{proof}

The converse implication in Theorem~\ref{cor:discr1} is not valid as Theorem~\ref{ex:converse_to_countable} shows.

\begin{lemma}\label{lemma:closed}
  Let $A$ be an $\mathbb S^2$-closed set, $\varepsilon>0$ and $L(A;\varepsilon)=\{p\in \mathbb S^2: B[p;\varepsilon)\subseteq A\}$. Then $L(A;\varepsilon)$ is $\mathbb R^2$-closed.
\end{lemma}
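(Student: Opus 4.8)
The plan is to show that the complement $\rr\setminus L(A;\varepsilon)$ is open in $\rr$; since $\rr$ is metrizable this is equivalent to the claim, and it avoids any sequential bookkeeping. So I would fix a point $p=(x,y)\notin L(A;\varepsilon)$ and produce a Euclidean neighborhood of $p$ lying in the complement. By definition of $L(A;\varepsilon)$ there is a witness $q=(s,t)\in B[p;\varepsilon)\setminus A$, i.e. $x\le s<x+\varepsilon$ and $y\le t<y+\varepsilon$ while $q\notin A$. Since $A$ is $\ss$-closed we have $q\notin\css A=A$, so there is a basic Sorgenfrey neighborhood $W=[s,s+\delta)\times[t,t+\delta)$ of $q$ with $W\cap A=\emptyset$.

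The heart of the argument is then a small perturbation of $p$. The key elementary observation is that membership of a point in a box $B[p';\varepsilon)$ survives small Euclidean perturbations of $p'$ precisely because the left endpoints are closed ($\ge x'$) while the right endpoints are open ($<x'+\varepsilon$). Concretely I would set $\eta=\min\{x+\varepsilon-s,\ y+\varepsilon-t,\ \delta\}>0$ and check that for every $p'=(x',y')$ with $|x'-x|<\eta$ and $|y'-y|<\eta$ the half-open interval $[\max\{s,x'\},\min\{s+\delta,x'+\varepsilon\})$, together with its analogue $[\max\{t,y'\},\min\{t+\delta,y'+\varepsilon\})$ in the second coordinate, is nonempty. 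Choosing $u$ and $v$ in these two intervals yields a point $r=(u,v)$ that lies simultaneously in $W$ and in $B[p';\varepsilon)$. Because $r\in W$ we get $r\notin A$, hence $B[p';\varepsilon)\not\subseteq A$, that is $p'\notin L(A;\varepsilon)$. Thus the whole $\eta$-cube around $p$ lies in the complement, which is therefore $\rr$-open, and $L(A;\varepsilon)$ is $\rr$-closed.

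The main (and really the only) point requiring care is the bookkeeping of the half-open inequalities: one must verify that all four inequalities $s<x'+\varepsilon$, $x'<s+\delta$, $t<y'+\varepsilon$, $y'<t+\delta$ hold for $p'$ in the $\eta$-cube, and it is exactly the strictness $s<x+\varepsilon$ and $t<y+\varepsilon$ (coming from $q\in B[p;\varepsilon)$) that leaves room for the perturbation while the closedness on the left keeps $\max\{s,x'\}\ge s$ compatible with $W$. This is the sole step where the asymmetry of the Sorgenfrey boxes, rather than the Euclidean geometry, is used; once it is in place the conclusion is immediate.
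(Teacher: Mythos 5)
Your proof is correct, but it runs in the opposite direction from the paper's. The paper argues directly with limit points: it takes $p_0\in{\rm cl}_{\mathbb R^2}L(A;\varepsilon)$, shows that every point $p$ of the Euclidean interior $U={\rm int}_{\mathbb R^2}B[p_0;\varepsilon)$ lies in $B[p_1;\varepsilon)$ for a suitable $p_1\in L(A;\varepsilon)$ Euclidean-close to $p_0$ (so $U\subseteq A$), and then upgrades this to the full half-open box via the closure identity $B[p_0;\varepsilon)={\rm cl}_{\mathbb S^2}U$ together with ${\rm cl}_{\mathbb S^2}A=A$; the $\mathbb S^2$-closedness of $A$ enters only in that last step, to absorb the left and bottom edges of the box. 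You instead prove the complement is Euclidean-open: you invoke the $\mathbb S^2$-closedness of $A$ at the outset, to place a half-open buffer box $W$ around a witness $q\in B[p;\varepsilon)\setminus A$, and your max/min interval bookkeeping (which checks out: $s<x'+\varepsilon$ follows from $\eta\le x+\varepsilon-s$, and $x'<s+\delta$ from $x\le s$ together with $\eta\le\delta$, likewise in the second coordinate) shows that every box $B[p';\varepsilon)$ with $p'$ in the $\eta$-cube meets $W$ and hence escapes $A$. Both arguments turn on the same one-sided robustness of the boxes $B[\cdot;\varepsilon)$ under perturbation, but they deploy it dually: the paper perturbs points of $L(A;\varepsilon)$ near the limit point, you perturb the center of a box known to leak out of $A$. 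What your route buys is that the box boundary is handled automatically and you never need the closure identity ${\rm cl}_{\mathbb S^2}U=B[p_0;\varepsilon)$, which the paper asserts without proof; the cost is the explicit four-inequality verification, which the paper's symmetric choice of $\delta=\min\{(x-x_0)/2,(y-y_0)/2,(x_0+\varepsilon-x)/2,(y_0+\varepsilon-y)/2\}$ hides in a single containment check.
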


\begin{proof}
We take $p_0=(x_0,y_0)\in {\rm cl}_{\mathbb R^2} L(A;\varepsilon)$ and show that $p_0\in L(A;\varepsilon)$. We consider
$U={\rm int}_{\mathbb R^2} B[p_0;\varepsilon)$ and prove that $U\subseteq A$. Take $p=(x,y)\in U$ and  put
$\delta=\min\{(x-x_0)/2, (y-y_0)/2,(x_0+\varepsilon-x)/2,(y_0+\varepsilon-y)/2\}$. Let
 $p_1\in B(p_0;\delta)\cap L(A;\varepsilon)$. It is easy to see that $p\in B[p_1;\varepsilon)$. Then $p\in A$, since $p_1\in L(A;\varepsilon)$. Hence, $U\subseteq A$. Then $B[p_0;\varepsilon)={\rm cl}_{\mathbb S^2}U\subseteq {\rm cl}_{\mathbb S^2}A=A$, which implies that $p_0\in L(A;\varepsilon)$. Therefore, $L(A;\varepsilon)$ is closed in $\rr$.
\end{proof}

\begin{theorem}\label{ex:converse_to_countable}
  There exists an $\ss$-closed countable discrete $G_\delta$-subspace $E$ of~$\rr$ which is not $C^*$-embedded in~$\ss$.
\end{theorem}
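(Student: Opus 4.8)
The plan is to exhibit $E$ explicitly as a countable set lying just above a Cantor-type subset of the anti-diagonal, and to refute $C^*$-embeddability by combining Theorem~\ref{Gill-J}, Theorem~\ref{strongly_zero_dim} and Lemma~\ref{lemma:closed} with a Baire-category argument. Fix a Cantor set $K\subseteq[0,1]$ and put $P=\{(t,-t):t\in K\}$; then $P$ is an $\rr$-compact, hence Baire, space, and its crucial feature is that distinct points of $P$ are incomparable in the coordinatewise order (for $t\neq t'$ neither $(t,-t)$ nor $(t',-t')$ lies in the other's north-east quadrant). I would build a countable set $E$ of points lying strictly above the anti-diagonal, with $\overline E^{\,\rr}=E\cup P$, split it as $E=A\sqcup B$, and arrange the density condition that for every nonempty relatively open $J\subseteq P$ and every $m\in\mathbb N$ both $A$ and $B$ contain a point belonging to $B[q;\tfrac1m)$ for some $q\in J$.

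Granting such an $E$, three of the four required properties come for free. The points of $E$ are $\rr$-isolated and $\overline E^{\,\rr}=E\cup P$ with $P$ compact, so $\rr\setminus E=(\rr\setminus\overline E^{\,\rr})\cup P$ is the union of an open set and a closed set, hence $F_\sigma$; thus $E$ is $G_\delta$ in $\rr$. It is countable by construction, and being $\rr$-discrete it is $\ss$-discrete as well. The genuinely delicate property is $\ss$-closedness: one must ensure that no $q=(t,-t)\in P$ is an $\ss$-limit of $E$, i.e.\ that some box $B[q;\eta)$ omits $E$. Here the incomparability of anti-diagonal points is decisive: the members of $E$ approaching a limit point $q_\infty=(t_\infty,-t_\infty)$ are placed north-east of points $q_n=(t_n,-t_n)\in P$ with $t_n\downarrow t_\infty$, while their second coordinate is kept strictly below $-t_\infty$; such a point is above the anti-diagonal (so it sits in boxes of nearby $P$-points) yet is never north-east of $q_\infty$ itself, so $q_\infty\notin\css E$. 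Choosing the feet of these points in the gaps of $K$ and making the families for different limit points use rapidly decreasing, sparse scales keeps every point of $P$ out of $\css E$ while preserving $\rr$-discreteness.

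For the non-embedding conclusion I would assume, toward a contradiction, that $A$ and $B$ are completely separated in $\ss$. By Theorem~\ref{strongly_zero_dim} there are disjoint clopen $U,V\subseteq\ss$ with $A\subseteq U$, $B\subseteq V$. Since $U$ is $\ss$-open, $U=\bigcup_n L(U;\tfrac1n)$, and likewise $V=\bigcup_n L(V;\tfrac1n)$; as $U$ and $V$ are also $\ss$-closed, Lemma~\ref{lemma:closed} makes each $L(U;\tfrac1n)$ and $L(V;\tfrac1n)$ an $\rr$-closed set. Restricting to $P$ gives $P=\bigcup_n\bigl[(L(U;\tfrac1n)\cup L(V;\tfrac1n))\cap P\bigr]$, a countable union of relatively closed subsets of the Baire space $P$, so some $(L(U;\tfrac1{n_0})\cup L(V;\tfrac1{n_0}))\cap P$ contains a nonempty relatively clopen $J_0\subseteq P$. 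Applying the Baire theorem once more inside $J_0$ to the two relatively closed sets covering it, I obtain a nonempty relatively open $J\subseteq J_0$ contained in, say, $L(U;\tfrac1{n_0})$. Then the thickening $\bigcup_{q\in J}B[q;\tfrac1{n_0})$ is contained in $U$, while the density condition yields a point $b\in B$ lying in $B[q;\tfrac1{n_0})$ for some $q\in J$; hence $b\in U\cap V=\emptyset$, a contradiction. (The symmetric case $J\subseteq L(V;\tfrac1{n_0})$ uses a point of $A$.) Thus $A$ and $B$ are not completely separated, so by Theorem~\ref{Gill-J} the discrete set $E$ is not $C^*$-embedded in $\ss$.

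The hard part is the simultaneous fulfilment of the two competing demands on $E$: $\ss$-closedness forces the points of $E$ to avoid the north-east approach to every point of $P$, whereas the capturing step needs points of both classes inside arbitrarily small boxes $B[q;\tfrac1{n_0})$ over $P$, that is, north-east of $P$-points at every scale. Reconciling them is exactly where the coordinatewise incomparability of anti-diagonal points is exploited: one can be north-east-close to $q_n\in P$ while approaching the limit $q_\infty$ from its ``south-east, above the diagonal'' side, so that no single point of $P$ ever accumulates $E$ from the north-east. Making this bookkeeping uniform over a countable dense family of target points in $P$, so that no $P$-point inadvertently collects infinitely many north-east neighbours from distinct families, is the main technical obstacle; the $G_\delta$, countability, discreteness and Baire-category ingredients are then routine.
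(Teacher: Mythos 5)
Your overall strategy coincides with the paper's: a Cantor set sitting on a line of slope $-1$ as the Baire ``core'', a countable set of points placed to the north-east of its points at the scale of the gaps, a splitting into two halves each dense over the core at every scale, and a refutation of complete separation via Theorem~\ref{Gill-J}, Theorem~\ref{strongly_zero_dim}, the decomposition $U=\bigcup_n L(U;\frac1n)$, Lemma~\ref{lemma:closed} and Baire category. The genuine gap is that you never actually construct $E$: you state the competing demands ($\ss$-closedness versus two-sided density of $A$ and $B$ in arbitrarily small boxes over $P$) and explicitly defer the ``bookkeeping'' that reconciles them as the main technical obstacle. For an existence theorem this deferred step is the entire content, and your heuristic (approach each limit point from the side, exploiting incomparability on the anti-diagonal) does not by itself rule out that a point of $P$ accumulates north-east neighbours contributed by infinitely many different families. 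The paper closes exactly this gap with a one-line witness that makes the bookkeeping automatic: enumerate the complementary intervals $I_n=(a_n,b_n)$ of the Cantor set $C$ with decreasing diameters and put $p_n=(b_n,1-a_n)$, $E=\{p_n:n\in\mathbb N\}$, $F=\{(x,1-x):x\in\mathbb R\}\cap(C\times[0,1])$. Since each point of $C$ is the endpoint of at most two complementary intervals, any box $B[q;\varepsilon)$ anchored at $q\in F$ contains at most two points of $E$, so $\ss$-closedness (your ``hard part'') is immediate; and the density condition for both halves at every scale is obtained simply by choosing $N'\subseteq\mathbb N$ with $\{b_n:n\in N'\}$ and $\{b_n:n\in\mathbb N\setminus N'\}$ both dense in $C$, the relevant scale being $b_n-a_n\to0$. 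So your desiderata are satisfiable, but your proposal does not establish that they are.

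A secondary flaw in your category argument: the covering $P=\bigcup_n\bigl[(L(U;\frac1n)\cup L(V;\frac1n))\cap P\bigr]$ presupposes $P\subseteq U\cup V$, and disjoint clopen sets $U\supseteq A$, $V\supseteq B$ need not cover $P$, since the points of $P$ lie in neither $A$ nor $B$. This is easily repaired: take $U$ clopen with $A\subseteq U\subseteq\ss\setminus B$ (Theorem~\ref{strongly_zero_dim}) and set $V=\ss\setminus U$, after which your double application of the Baire theorem on $P$ goes through. With that fix, your endgame is a legitimate, slightly streamlined alternative to the paper's: the paper instead shows that $W_1\cap F$ and $W_2\cap F$ are both $\rr$-dense in $F$ and invokes Lemma~\ref{char_zero_set} to see they are $G_\delta$ in $\rr$, so $F$ would contain two disjoint dense $G_\delta$-sets, contradicting that $F$ is Baire; your localization to a relatively open $J\subseteq L(U;\frac1{n_0})$ avoids Lemma~\ref{char_zero_set} but needs the density-at-all-scales condition, which is again exactly what the unperformed construction was supposed to deliver.
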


\begin{proof} Let $C$ be the standard Cantor set on $[0,1]$ and let  $(I_n)_{n=1}^\infty$ be a sequence of all complementary intervals $I_n=(a_n,b_n)$ to $C$ such that ${\rm diam\,} (I_{n+1})\le {\rm diam\,} (I_n)$ for every $n\ge 1$. We put $p_n=(b_n;1-a_n)$,  $E=\{p_n:n\in\mathbb N\}$ and $F=\{(x,1-x):x\in\mathbb R\}\cap (C\times [0,1])$. Notice that $E$ is a closed subset of $\ss$,  $F$ is functionally closed in $\ss$ and $E\cap F=\emptyset$.

Let $N'\subseteq\mathbb N$ be a set such that $\{b_n:n\in N'\}$ and $\{b_n:n\in\mathbb N\setminus N'\}$ are dense subsets of $C$. To show that $E$ is not $C^*$-embedded in $\ss$ we verify that disjoint clopen subsets
$$
E_1=\{p_n:n\in N'\}\quad\mbox{and}\quad  E_2=\{p_n:n\in\mathbb N\setminus N'\}
$$
of $E$ can not be separated by disjoint clopen subsets in $\ss$. Assume the contrary and  take disjoint clopen subsets $W_1$ and $W_2$ of $\ss$ such that $W_i\cap E=E_i$ for $i=1,2$.

We prove that $W_1\cap F$ is $\rr$-dense in $F$. To obtain a contradiction we take an $\rr$-open set $O$ such that $O\cap F\cap W_1=\emptyset$.  Since the set $U=\ss\setminus W_1$ is clopen, $U=\bigcup\limits_{n=1}^\infty L(U;\frac 1n)$, where $L(U;\frac 1n)=\{p\in \mathbb S^2: B[p;1/n)\subseteq U\}$ and the set $F_n=L(U;\frac 1n)$ is  $\mathbb R^2$-closed by Lemma~\ref{lemma:closed} for every $n\in\mathbb N$. Since $O\cap F$ is a Baire subspace of $\mathbb R^2$, there exist  $N\in\mathbb N$ and an $\rr$-open in $F$ subset $I\subseteq F$ such that $I\cap O\subseteq F_{N}\cap F\subseteq \ss\setminus E_1$. Taking into account that ${\rm diam}\, (I_n)\to 0$, we choose  $n_1>N$ such that $b_n-a_n<\frac {1}{2N}$ for all $n\ge n_1$. Since the set $\{a_n:n\in N'\}$ is dense in $C$, there exists $n_2\in N'$ such that $n_2>n_1$ and $p=(a_{n_2};1-a_{n_2})\in I$. Clearly, $p\in F$. Consequently, $B[p;\frac 1N)\cap E_1=\emptyset$. But $p_{n_2}\in B[p,\frac 1N)\cap E_1$, a contradiction.

Similarly we can show that $W_2\cap F$ is also $\rr$-dense in $F$.

Notice that $W_1$ and $W_2$ are $G_\delta$ in $\rr$ by Lemma~\ref{char_zero_set}. Hence, $W_1\cap F$ and $W_2\cap F$ are disjoint  dense $G_\delta$-subsets of a Baire space $F$, which implies a contradiction. Therefore, $E$ is not $C^*$-embedded in $\ss$.
\end{proof}

\section{A characterization of $C$-embedded subsets of the anti-diagonal of $\mathbb S^2$.}

By $\mathbb D$ we denote the {\it anti-diagonal} $\{(x,-x):x\in\mathbb R\}$ of the Sorgenfrey plane. Notice that $\mathbb D$ is a closed discrete subspace of $\ss$.

\begin{theorem}
 For a set $E\subseteq {\mathbb D}$ the following conditions are equivalent:
  \begin{enumerate}
   \item $E$ is $C$-embedded in $\mathbb S^2$;

   \item $E$ is $C^*$-embedded in $\mathbb S^2$;

   \item $E$ is a countable $G_\delta$-subspace of $\rr$;

   \item $E$ is a countable functionally closed subspace of $\ss$.
  \end{enumerate}
\end{theorem}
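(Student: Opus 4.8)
The plan is to establish the cyclic chain $(1)\Rightarrow(2)\Rightarrow(3)\Rightarrow(4)\Rightarrow(1)$, where three of the four links are short and the real content sits in $(3)\Rightarrow(4)$. The implication $(1)\Rightarrow(2)$ is immediate. For $(2)\Rightarrow(3)$ I would use that $\mathbb D$ is a closed discrete subspace of $\ss$, so any $E\subseteq\mathbb D$ is discrete; Theorem~\ref{cor:discr1} then gives directly that $E$ is a countable $G_\delta$-subspace of $\rr$. For $(4)\Rightarrow(1)$ I would invoke the Blair--Hager criterion (Theorem~\ref{th:Blair-Hager}): a countable subspace is always $z$-embedded, so it only remains to see that $E$ is well-embedded. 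But if $G$ is a functionally closed set of $\ss$ disjoint from the functionally closed set $E$, then two disjoint zero-sets are automatically completely separated (through $|f|/(|f|+|g|)$ for defining functions $f,g$, whose sum never vanishes); hence $E$ is well-embedded and therefore $C$-embedded.

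The substance is $(3)\Rightarrow(4)$, and this is where the geometry of the anti-diagonal must be exploited. Write $E=\{p_n:n\in\mathbb N\}$ with $p_n=(x_n,-x_n)$, and verify the two conditions of Lemma~\ref{char_zero_set}. Condition~(\ref{cond:E_is_Gdelta}) is hypothesis $(3)$. For condition~(\ref{cond:sep_from_R2-closed}) I must completely separate $E$ from an arbitrary $\rr$-closed set $F$ disjoint from $E$; by Theorem~\ref{strongly_zero_dim} it suffices to produce a clopen $U\subseteq\ss$ with $E\subseteq U\subseteq\ss\setminus F$. Identifying $\mathbb D$ with $\mathbb R$ via $(x,-x)\mapsto x$, set $\widetilde E=\{x_n:n\in\mathbb N\}$ and $\widetilde D={\rm cl}_{\mathbb R}\widetilde E$. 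Since $E$ is $G_\delta$ in $\rr$ it is $G_\delta$ in the closed line $\mathbb D$, so $\widetilde E$ is $G_\delta$ in $\mathbb R$ and $\widetilde D\setminus\widetilde E$ is $F_\sigma$; write it as $\bigcup_m K_m$ with $K_m$ closed in $\mathbb R$ and $K_m\subseteq K_{m+1}$. For each $n$ I would choose $\varepsilon_n>0$ so small that (i)~$\varepsilon_n\le 1/n$; (ii)~$B[p_n;\varepsilon_n)\cap F=\emptyset$ (possible since $p_n\notin F$ and $F$ is $\rr$-closed); and (iii)~$2\varepsilon_n<{\rm dist}(x_n,\bigcup_{m\le n}K_m)$, the latter distance being positive because $x_n\in\widetilde E$ lies outside the closed set $\bigcup_{m\le n}K_m$. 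Put $U=\bigcup_n B[p_n;\varepsilon_n)$. Then $U$ is open, $E\subseteq U$ and $U\cap F=\emptyset$, so everything reduces to showing that $U$ is closed.

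The closedness of $U$ is the heart of the matter and rests on a direct description of when two basic boxes meet. For $q=(s,t)$ and $p_n=(x_n,-x_n)$ one checks that $B[q;\eta)\cap B[p_n;\varepsilon_n)\ne\emptyset$ forces $s-\varepsilon_n<x_n<s+\eta$ and $-t-\eta<x_n<-t+\varepsilon_n$; letting $\eta\to 0$ at a hypothetical point $q\in\css U\setminus U$ and using $\varepsilon_n\to 0$ shows first that $t=-s$, i.e. $q=(s,-s)\in\mathbb D$, with $x_{n_k}\to s$ for suitable distinct $n_k$. Hence $s\in\widetilde D$, while $s\notin\widetilde E$ since $q\notin U\supseteq E$. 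Specializing the same computation to $q\in\mathbb D$ yields the clean criterion that $B[q;\eta)\cap B[p_n;\varepsilon_n)\ne\emptyset$ holds exactly when $|x_n-s|<\eta$ and $\varepsilon_n>|x_n-s|$; because $\varepsilon_n\to 0$, membership $q\in\css U$ is therefore equivalent to $s\in J_n:=(x_n-\varepsilon_n,x_n+\varepsilon_n)$ for infinitely many $n$, that is, $s\in\limsup_n J_n$. But condition~(iii) guarantees that for every $m$ and every $s\in K_m$ one has $s\notin J_n$ as soon as $n\ge m$, so $\limsup_n J_n$ is disjoint from $\bigcup_m K_m=\widetilde D\setminus\widetilde E$. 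This contradicts $s\in\widetilde D\setminus\widetilde E$, so no such $q$ exists, $U$ is clopen, and the required separation is obtained.

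I expect the last step to be the main obstacle: controlling the $\ss$-closure of the countable union of clopen boxes. What makes it succeed here, and what must fail for general countable discrete $G_\delta$ sets in view of Theorem~\ref{ex:converse_to_countable}, is the interaction between the orientation of the anti-diagonal and the Sorgenfrey boxes, which collapses box-overlap to the single inequality $\varepsilon_n>|x_n-s|$, combined with the $G_\delta$ hypothesis, which exhibits the forbidden limit points $\widetilde D\setminus\widetilde E$ as a countable increasing union of closed sets that the radii $\varepsilon_n$ can be diagonally chosen to avoid.
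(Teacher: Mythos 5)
Your proof is correct, but its core implication $(3)\Rightarrow(4)$ takes a genuinely different route from the paper's. The outer links coincide: like the paper, you get $(2)\Rightarrow(3)$ from Theorem~\ref{cor:discr1} and $(4)\Rightarrow(1)$ from Theorem~\ref{th:Blair-Hager} via ``countable implies $z$-embedded'' and ``functionally closed implies well-embedded,'' and both arguments for $(3)\Rightarrow(4)$ reduce to condition~(\ref{cond:sep_from_R2-closed}) of Lemma~\ref{char_zero_set}. For the separation itself, however, the paper works from the opposite side: it surrounds the trace $D=F\cap\mathbb D$ of the closed set $F$ with \emph{unit} boxes, $U=\bigcup_{p\in D}B[p;1)$, proves this union clopen by a bounded-sequence/convergent-subsequence argument that uses only the $\rr$-closedness of $F$ (no tuning of radii), and then invokes Terasawa's strong zero-dimensionality (Theorem~\ref{strongly_zero_dim}) to produce a clopen $V\supseteq F\setminus U$ disjoint from the functionally closed set $\mathbb D$, so that $F\subseteq U\cup V\subseteq\ss\setminus E$. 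You instead surround $E$ with shrinking boxes whose radii $\varepsilon_n$ are diagonally chosen against an increasing $F_\sigma$ decomposition of $\widetilde D\setminus\widetilde E$, and your clopenness proof via the intersection criterion $|x_n-s|<\min(\eta,\varepsilon_n)$ is sound; the one detail you wave at --- extracting \emph{distinct} indices $n_k$ --- is easily repaired, since if a single index $n_0$ served arbitrarily small $\eta$ the inequalities would force $q\in B[p_{n_0};\varepsilon_{n_0})\subseteq U$, contradicting $q\notin U$. Your route buys independence from Theorem~\ref{strongly_zero_dim}: a clopen $U$ with $E\subseteq U\subseteq\ss\setminus F$ completely separates $E$ from $F$ directly via its characteristic function, so your citation of that theorem is in the wrong (and trivial) direction and can simply be dropped. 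It also makes explicit where the $G_\delta$ hypothesis enters the geometry, which squares well with Theorem~\ref{ex:converse_to_countable}; by contrast, in the paper the $G_\delta$ hypothesis is consumed only by condition~(\ref{cond:E_is_Gdelta}) of Lemma~\ref{char_zero_set}, since its clopen separation actually splits $F$ from all of $\mathbb D\setminus F$ regardless of $E$. What the paper's route buys in exchange is a shorter clopenness proof with fixed radii and no delicate choice of $\varepsilon_n$, at the cost of leaning on Terasawa's theorem.
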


\begin{proof}
The implication $(1)\Rightarrow (2)$ is obvious.
The implication $(2)\Rightarrow (3)$ follows from Theorem~\ref{cor:discr1}.

We prove  $(3)\Rightarrow (4)$. To do this we verify condition~(\ref{cond:sep_from_R2-closed}) from Lemma~\ref{char_zero_set}. Let $F$  be an
$\rr$-closed set disjoint from $E$. Denote $D=F\cap \mathbb D$ and  $U=\bigcup\limits_{p\in D}B[p;1)$. We show that  $U$ is clopen in $\ss$. Clearly, $U$ is open in $\ss$. Take a point $p_0\in{\rm cl}_{\ss}U$ and show that  $p_0\in U$. Choose a sequence $p_n\in U$ such that $p_n\to p_0$ in $\ss$. For every $n$ there exists $q_n\in D$ such that $p_n\in B[q_n,1)$. Notice that the sequence $(q_n)_{n=1}^\infty$ is bounded in  $\rr$ and take a convergent in  $\rr$ subsequence $(q_{n_k})_{k=1}^\infty$ of $(q_n)_{n=1}^\infty$. Since  $D$ is $\rr$-closed, $q_0=\lim\limits_{k\to\infty}q_{n_k}\in D$. Then  $p_0\in{\rm cl}_{\rr}B[q_0,1)$. If $p_0\in B[q_0,1)$, then $p_0\in U$. Assume $p_0\not\in B[q_0,1)$ and let $q_0=(x_0,y_0)$. Without loss of generality we may suppose that $p_0\in [x_0,x_0+1]\times\{y_0+1\}$. Since  $p_{n_k}\to p_0$ in $\ss$,  $q_{n_k}\in (-\infty,x_0]\times [y_0,+\infty)$ for all $k\ge k_0$ and $p_0\in [x_0,x_0+1)\times\{y_0+1\}$. Then $p_0\in \bigcup_{k=1}^\infty B[q_{n_k},1)\subseteq U$. Hence, $U$ is clopen and  $D=U\cap\mathbb D$. Since $\mathbb D$ and $F\setminus U$ are disjoint functionally closed subsets of $\ss$, there exists a clopen set  $V$ such that  $\mathbb D\cap V=\emptyset$ and $F\setminus U\subseteq V$. Then $F\subseteq U\cup V\subseteq \ss\setminus E$. Consequently, $F$ and $E$ are completely separated in $\ss$. Therefore, $E$ is functionally closed in $\ss$ by Lemma~\ref{char_zero_set}.

$(4)\Rightarrow (1)$.
Notice that  $E$ satisfy the conditions of Theorem~\ref{th:Blair-Hager}. Indeed, $E$ is $z$-embedded in $\ss$, since $|E|\le\aleph_0$. Moreover, $E$ is well-embedded in $\ss$, since $E$ is functionally closed.
\end{proof}

\begin{remark}
  {\rm Notice that a subset $E$ of $\rr$ is countable $G_\delta$ if and only if it is scattered in $\rr$. Indeed, assume that $E$ is countable $G_\delta$-set which contains a set $Q$ without isolated points. Then $Q$ is a $G_\delta$-subset of $\rr$ which is homeomorphic to $\mathbb Q$, a contradiction. On the other hand, if $E$ is scattered, then Lemma~\ref{lemma:uncount} implies that $E$ is countable. Since $E$ is hereditarily Baire and countable, $E$ is $G_\delta$ in $\rr$.}
\end{remark}

Finally, we show that the Sorgenfrey plane is not a $\delta$-normally separated space. Let $E=\{(x,-x):x\in\mathbb Q\}$ and $F=\mathbb D\setminus E$. Then $E$ is closed and $F$ is functionally closed in $\ss$, since $F$ is the difference of the functionally closed set $\mathbb D$ and the functionally open set $\bigcup\limits_{p\in E}B[p,1)$. But $E$ and $F$ can not be separated by disjoint clopen sets in $\ss$, because $E$ is not  $G_\delta$-subset of $\mathbb D$ in $\rr$.

{\small

}


\begin{thebibliography}{27}

\bibitem{Bade} W.~Bade, {\it Two properties of the Sorgenfrey plane}, Pasif. J. Math., {\bf 51} (2) (1974), 349--354.

\bibitem{BlairHager} R.~Blair, A.~Hager, {\it Extensions of zero-sets and of real-valued functions}, Math. Zeit. {\bf 136} (1974), 41--52.

\bibitem{Debs} G.~Debs, {\it Espaces h\'{e}r\'{e}ditairement de Baire}, Fund. Math. {\bf 129} (3) (1988), 199--206.

\bibitem{Eng-eng} R.~Engelking, {\it General Topology. Revised and completed edition.} Heldermann Verlag, Berlin (1989).

\bibitem{GillJer}  L.~Gillman,  M.~Jerison, {\it Rings of continuous functions}, Van Nostrand, Princeton (1960).

\bibitem{HM} R.~Heath, E.~Michael, {\it A property of the Sorgenfrey line}, Comp. Math., {\bf 23} (2) (1971), 185--188.

\bibitem{HoshinaYamazaki} T.~Hoshina, K.~Yamazaki, {\it Weak $C$-embedding and $P$-embedding, and product spaces}, Topol. Appl. 125(2002) 233--247.

\bibitem{Ka} O.~Kalenda, J.~Spurn\'{y},  {\it Extending Baire-one functions on topological spaces},  Topol. Appl. {\bf 149} (2005), 195--216.

\bibitem{KCMUC} O.~Karlova, {\it On $\alpha$-embedded sets and extension of mappings}, Comment. Math. Univ. Carolin., {\bf 54} (3) (2013), 377--396.

\bibitem{Mack} J.~Mack, {\it Countable paracompactness and weak normality properties}, Trans. Amer. Math. Soc. {\bf 148} (1970), 265--272.

\bibitem{Ohta} H.~Ohta, {\it Extension properties and the Niemytzki plane}, Appl. Gen. Topol. {\bf 1} (1) (2000), 45--60.

\bibitem{OpenProblems2} H. Ohta, K. Yamazaki, {\it Extension problems of real-valued continuous functions}, in: ''Open problems in topology II'', E.~Pearl (ed.), Elsevier, 2007, 35--45.

\bibitem{S-R} J.~Saint-Raymond, {\it Jeux topologiques et espaces de Namioka}, Proc. Amer. Math. Soc. 87:3 (1983), 409--504.

\bibitem{Serp} W.~Sierpi\'{n}ski, {\it Sur une propriete topologique des ensembles denombrables denses en soi}, Fund. Math. {\bf 1} (1920), 11--16.

\bibitem{Tanaka} Y.~Tanaka, {\it On closedness of $C$- and $C^*$-embeddings}, Pasif. J. Math., {\bf 68} (1) (1977), 283--292.

\bibitem{Terasawa} J.~Terasawa, {\it On the zero-dimensionality of some non-normal product spaces}, Sci. Rep. Tokyo Kyoiku Daigaku Sect. A 11 (1972), 167--174.
\end{thebibliography}
\end{document}